\def\B'c{{\mathcal{B'}}}
\def\U'c{{\mathcal{U'}}}
\def\opn#1#2{\def#1{\operatorname{#2}}} 
\opn\chara{char}
\opn\length{\ell}
\opn\projdim{proj\,dim}
\opn\injdim{inj\,dim}
\opn\ini{in}
\opn\rank{rank}
\opn\depth{depth}
\opn\height{ht}
\opn\embdim{emb\,dim}
\opn\codim{codim}
\opn\Tr{Tr}
\opn\bigrank{big\,rank}
\opn\superheight{superheight}\opn\lcm{lcm}
\opn\trdeg{tr\,deg}%
\opn\reg{reg}
\opn\lreg{lreg}
\opn\set{set}
\opn\supp{Supp}
\opn\shad{Shad}
\opn\del{del}
\opn\succ{succ}
\opn\pred{pred}
\opn\div{div}
\opn\Div{Div}
\opn\cl{cl}
\opn\Cl{Cl}
\opn\Spec{Spec}
\opn\Supp{Supp}
\opn\supp{supp}
\opn\Sing{Sing}
\opn\Ass{Ass}
\opn\Min{Min}
\opn\Mon{Mon}
\opn\Ann{Ann}
\opn\Rad{Rad}
\opn\Soc{Soc}
\opn\Ker{Ker}
\opn\Coker{Coker}
\opn\Im{Im}
\opn\Hom{Hom}
\opn\Tor{Tor}
\opn\Ext{Ext}
\opn\End{End}
\opn\Aut{Aut}
\opn\id{id}
\opn\nat{nat}
\opn\GL{GL}
\opn\SL{SL}
\opn\mod{mod}
\opn\ord{ord}
\opn\aff{aff}
\opn\con{conv}
\opn\relint{relint}
\opn\st{st}
\opn\lk{lk}
\opn\cn{cn}
\opn\core{core}
\opn\vol{vol}
\opn\gr{gr}
\def\pot#1#2{#1[\kern-0.28ex[#2]\kern-0.28ex]}
\opn\dirlim{\underrightarrow{\lim}}
\opn\invlim{\underleftarrow{\lim}}
\def\pnt{{\raise0.5mm\hbox{\large\bf.}}}
\def\twoline#1#2{\aoverb{\scriptstyle {#1}}{\scriptstyle {#2}}}
\newcommand{\aoverb}[2]{{\genfrac{}{}{0pt}{1}{#1}{#2}}}
\def\Implies{\ifmmode\Longrightarrow \else
     \unskip${}\Longrightarrow{}$\ignorespaces\fi}
\def\implies{\ifmmode\Rightarrow \else
     \unskip${}\Rightarrow{}$\ignorespaces\fi}
\def\iff{\ifmmode\Longleftrightarrow \else
     \unskip${}\Longleftrightarrow{}$\ignorespaces\fi}
\newtheorem{Theorem}{Theorem}[section]
\newtheorem{Lemma}[Theorem]{Lemma}
\newtheorem{Corollary}[Theorem]{Corollary}
\newtheorem{Proposition}[Theorem]{Proposition}
\newtheorem{Definition}[Theorem]{Definition}
\let\epsilon=\varepsilon
\let\phi=\varphi
\let\kappa=\varkappa
\title{Classes of sequentially Cohen--Macaulay squarefree monomial ideals}
\author{Oana Olteanu}
\address{Faculty of Mathematics and Computer Science, Ovidius University, Bd.\ Mamaia 124,
 900527 Constanta, Romania,} \email{olteanuoanastefania@gmail.com}
\begin{document}

\maketitle

\begin{abstract}
We compute the minimal primary decomposition for completely squarefree lexsegment ideals. We show that critical squarefree monomial ideals are sequentially Cohen--Macaulay. As an application, we give a complete characterization of the completely squarefree lexsegment ideals which are sequentially Cohen--Macaulay and we also derive formulas for some homological invariants of this class of ideals. 

Keywords: squarefree lexsegment ideals, primary decomposition, sequentially Cohen--Macaulay ideals.\\ 

MSC: 13H10, 13A15.

\end{abstract}

\section*{Introduction}
In analogy with the notion of nonpure shellable simplicial complex introduced by Bj\"orner and Wachs, Stanley \cite[Section III.2]{St} defined the concept of sequentially Cohen--Macaulay module, a nonpure generalization of Cohen--Macaulayness. A simplicial complex $\Delta$ is \textit{sequentially Cohen--Macaulay} if all its pure skeletons are Cohen--Macaulay. It is known (\cite{HH}) that the associated Stanley--Reisner ideal $I_\Delta$ is sequentially Cohen--Macaulay, that is, $S/I_{\Delta}$ is a sequentially Cohen--Macaulay module, if and only if $I^\vee=I_{\Delta^\vee}$ is componentwise linear, which means that for all $d\geq 0$, the ideal $I^\vee_{\langle d\rangle}$ generated by all degree $d$ elements in $I^\vee$ has a linear resolution. Here we denoted as usual by $\Delta^\vee$ the Alexander dual of $\Delta$.

The critical ideals have been studied in \cite{MH}. We consider in this paper the critical squarefree monomial ideals. It turns out that they are componentwise linear (Corollary \ref{critical comp lin}). This property will be useful in studying Alexander duals of squarefree lexsegment ideals.  
 
Let $S=k[x_1,\ldots,x_n]$ be the polynomial ring in $n$ variables over a field $k$. We order the monomials in $S$ lexicographically with $x_1>x_2>\ldots >x_n$. For an arbitrary integer $q\geq 2$, we denote by $\Mon^s_q(S)$ the set of all squarefree monomials of degree $q$ in $S$. A \textit{squarefree lexsegment set of degree $q$} determined by the monomials $u,\ v\in \Mon_q^s(S)$ is a subset of $\Mon^s_q(S)$ of the form $L(u,v)=\{w\in\Mon_q^s(S)\ :\ u\geq_{lex}w\geq_{lex} v\}$. An ideal generated by a squarefree lexsegment set is called \textit{squarefree lexsegment ideal}. In particular, one can define \textit{initial} and \textit{final squarefree lexsegment sets} to be sets of the form $L^i(v)=\{w\in\Mon_q^s(S)\ :\ w\geq_{lex} v\}$, respectively $L^f(u)=\{w\in\Mon_q^s(S)\ :\ u\geq_{lex}w\}$ and \textit{initial} and \textit{final squarefree lexsegment ideals}, accordingly. 

The concept of squarefree lexsegment ideal was introduced in \cite{AHH} by Aramova, Herzog and Hibi, where the notion was associated with the nowadays concept of initial squarefree lexsegment ideals, but it was also studied in \cite{ADH}, \cite{AHH}, \cite{B} and \cite{BoS}.  

A lexsegment $L$ is called \textit{completely lexsegment} if all the iterated shadows of $L$ are again lexsegments. By the \textit{shadow} of a set $T$ of monomials in $S$ we mean the set $\shad(T)=\{wx_i: w\in T, x_i\nmid w,1\leq i\leq n\}$. The \textit{$i$-th shadow} is defined recursively by $\shad^i(T)=\shad(\shad^{i-1}(T))$. Initial squarefree lexsegment ideals are examples of completely squarefree lexsegment ideals. In \cite{B}, Bonanzinga proved a persistence theorem for squarefree lexsegment ideals. Moreover, the squarefree lexsegment ideals with a linear resolution were characterized in \cite{B}.

In this paper we are interested in studying the completely squarefree lexsegment ideals. In Section 2, we explicitly compute the minimal primary decomposition for initial and final squarefree lexsegment ideals. Using the fact that any completely squarefree lexsegment ideal $(L(u,v))$ can be written as the intersection of $(L^i(v))$ with $(L^f(u))$, we are able to compute the standard primary decomposition for this class of squarefree lexsegment ideals. As first consequences, we obtain formulas for the Krull dimension of $S/I$, in the case that $I$ is a completely squarefree lexsegment ideal and its multiplicity.

As an application of the minimal primary decomposition, in Section 3 we characterize all the completely squarefree lexsegment ideals which are sequentially Cohen--Macaulay. 

In the last section we give bounds for $\depth(S/I)$, where $I$ is an arbitrary squarefree lexsegment ideal. 
	\[
\]
\textbf{Acknowledgment.} The author would like to thank Professor J\"urgen Herzog and Professor Viviana Ene for valuable discussions and comments during the preparation of this paper, and suggestions for improvement.  
	\[
\]
\section{Preliminaries}

Let $S=k[x_1,\ldots,x_n]$ be the polynomial ring in $n$ variables over a field $k$.

For an integer $q\geq 2$, let $\Mon_q^s(S)$ be the set of all squarefree monomials of degree $q$ in $S$. We consider the lexicographical order on the monomials in $S$ with $x_1>\ldots>x_n$. 

Given two monomials $u,\ v\in\Mon_q^s(S)$, the set $L(u,v)=\{w\in\Mon_q^s(S)\ :\ u\geq_{lex}w\geq_{lex} v\}$ is called the \textit{squarefree lexsegment set} determined by $u$ and $v$. In particular, the set $L^i(v)=\{w\in\Mon_q^s(S)\ :\ w\geq_{lex} v\}$ is the \textit{initial squarefree lexsegment set} of $v$ and $L^f(u)=\{w\in\Mon_q^s(S)\ :\ u\geq_{lex}w\}$ is called the \textit{final squarefree lexsegment set} of $u$. An \textit{(initial, final) squarefree lexsegment ideal} is the squarefree monomial ideal generated by an (initial, final) squarefree lexsegment set.

For a squarefree monomial ideal $I\subset S$, we may consider the simplicial complex $\Delta$ on the vertex set $[n]$ such that the Stanley--Reisner ideal of $\Delta$ is $I$. The standard primary decomposition of a Stanley--Reisner ideal can be written by looking at the facets of the simplicial complex.

\begin{Proposition}\cite{St}\label{St Reisner ideal decomposition}
Let $\Delta$ be the simplicial complex on the vertex set $[n]$ and $k$ a field. Then
$$I_{\Delta}=\bigcap\limits_{F\in\mathcal F(\Delta)}P_{F^c},$$
where $P_{F^c}$ are the prime ideals generated by all the variables $x_i$ such that $i\notin F$. 
\end{Proposition}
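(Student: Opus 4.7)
The plan is to prove the equality of the two squarefree monomial ideals by showing that a squarefree monomial belongs to the left-hand side if and only if it belongs to the right-hand side. The first observation is that both sides are squarefree monomial ideals: $I_\Delta$ is by definition, and each $P_{F^c}$ is generated by variables, hence so is any intersection of such ideals (the intersection of squarefree monomial ideals remains a squarefree monomial ideal). Consequently it suffices to compare the squarefree monomials lying in each side.

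Next I would fix a subset $G\subseteq[n]$ and set $x_G=\prod_{i\in G}x_i$. Unwinding the definition of the Stanley--Reisner ideal, $x_G\in I_\Delta$ holds precisely when $G\notin\Delta$, i.e., when $G$ is not contained in any facet of $\Delta$. On the other hand, from $P_{F^c}=(x_i : i\notin F)$ one sees that $x_G\in P_{F^c}$ iff $G\cap F^c\neq\emptyset$, i.e., iff $G\not\subseteq F$. Taking the intersection over facets yields
\[
x_G\in\bigcap_{F\in\mathcal F(\Delta)}P_{F^c} \iff G\not\subseteq F \text{ for every } F\in\mathcal F(\Delta),
\]
which is the same as saying $G$ is not contained in any facet of $\Delta$. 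Since every face of $\Delta$ is contained in some facet, this is equivalent to $G\notin\Delta$. The two characterizations coincide, so the inclusion $x_G\in I_\Delta$ is equivalent to $x_G\in\bigcap_{F}P_{F^c}$, and the asserted equality follows.

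There is no real obstacle here; the only small point requiring care is the justification that it is enough to check the equality on squarefree monomials, which relies on the standard fact that a monomial ideal generated by squarefree monomials (together with intersections of such) contains a monomial $m$ if and only if it contains the squarefree part of $m$ obtained by removing repeated variables, and in our case both ideals are already generated in squarefree degrees.
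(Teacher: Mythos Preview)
Your argument is correct and is the standard proof of this classical fact. Note, however, that the paper does not supply its own proof of this proposition: it is quoted as a known result from Stanley's book (the \cite{St} attached to the proposition heading), so there is no in-paper argument to compare against. Your reduction to squarefree monomials and the facet characterization $x_G\in I_\Delta \iff G\notin\Delta \iff G\not\subseteq F$ for every facet $F$ is exactly the usual route; nothing is missing.
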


For every integer $q\in[n]$, we denote by $I_{n,q}\subset S$ the squarefree monomial ideal generated by all the squarefree monomials of degree $q$. Then $I_{n,q}$ is the Stanley--Reisner ideal of $\Delta$, where $\Delta$ is generated by all the subsets of $[n]$ of cardinality $q-1$. Applying Proposition \ref{St Reisner ideal decomposition}, we obtain 
$$I_{n,q}=\bigcap\limits_{F\in\mathcal F(\Delta)}P_{F^c}=\bigcap\limits_{|F|=q-1}P_{F^c}.$$

It is known that the multiplicity of the Stanley--Reisner ring of a simplicial complex $\Delta$ can be expressed in terms of the $f-$vector of $\Delta$.

\begin{Lemma}\cite{BH}\label{multiplicity}
Let $\Delta$ be a simplicial complex of dimension $d-1$ and $k$ be a field. Then the multiplicity of the Stanley--Reisner ring of $\Delta$ is $e(k[\Delta])=f_{d-1}$. 
\end{Lemma}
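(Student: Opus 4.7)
The plan is to use the associativity formula for multiplicity together with the primary decomposition of $I_\Delta$ recalled in Proposition \ref{St Reisner ideal decomposition}. Writing $A=k[\Delta]=S/I_\Delta$, the associativity formula gives
\[
e(A) \;=\; \sum_{\pp}\,\ell\bigl(A_{\pp}\bigr)\cdot e(S/\pp),
\]
where $\pp$ ranges over the minimal primes of $A$ with $\dim S/\pp = \dim A = d$.

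By Proposition \ref{St Reisner ideal decomposition}, the minimal primes of $A$ are exactly the ideals $P_{F^c}$ as $F$ runs over the facets of $\Delta$, and $\dim S/P_{F^c}=|F|$. Since $\dim \Delta=d-1$, the facets contributing to the sum are those with $|F|=d$, and by the definition of the $f$-vector their number is $f_{d-1}$. For each such facet $F$, two simplifications occur: first, $S/P_{F^c}$ is a polynomial ring over $k$ in $d$ variables, so $e(S/P_{F^c})=1$; second, $I_\Delta$ is a radical ideal, hence $A$ is reduced, so the localization $A_{P_{F^c}}$ at a minimal prime is a field and therefore has length one. Substituting these values into the associativity formula yields $e(A)=f_{d-1}$.

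The main obstacle is really just bookkeeping, namely correctly matching the minimal primes of $A$ of maximal dimension with the top-dimensional facets of $\Delta$ and checking that the local contributions are all trivial. An alternative route would be via the Hilbert series $H_A(t)=\sum_{i=-1}^{d-1} f_i\,t^{i+1}/(1-t)^{i+1}$: after clearing denominators to $(1-t)^d$ and evaluating the numerator at $t=1$, only the term with $i=d-1$ survives, again giving $f_{d-1}$; however this requires a separate identification of the multiplicity with the value of the numerator at $t=1$, which is why the primary-decomposition approach seems cleaner here.
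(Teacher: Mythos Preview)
Your proof is correct. Note, however, that the paper does not supply its own proof of this lemma: it is simply quoted from \cite{BH}, so there is no argument in the paper to compare against directly.

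For context, the standard treatment in \cite{BH} proceeds along the lines of your alternative route: one computes the Hilbert series $H_{k[\Delta]}(t)=\sum_{i=-1}^{d-1} f_i\,t^{i+1}/(1-t)^{i+1}$, rewrites it with common denominator $(1-t)^d$, and reads off the multiplicity as the value of the numerator at $t=1$, which collapses to $f_{d-1}$. Your associativity-formula argument is a legitimate and arguably more structural alternative: it bypasses the Hilbert-series computation entirely and instead exploits directly the primary decomposition of $I_\Delta$ from Proposition~\ref{St Reisner ideal decomposition}, together with the fact that $I_\Delta$ is radical (so each local length is $1$) and that each $S/P_{F^c}$ is a polynomial ring (so each local multiplicity is $1$). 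Both approaches are short; yours has the virtue of fitting seamlessly with the facet-based viewpoint the paper uses throughout.
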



For a squarefree monomial ideal $I\subset S$, one may use the following criterion due to Hibi (\cite{H}) to compute $\depth(S/I)$. We recall that the $i-$th skeleton of a simplicial complex $\Delta$ is $\Delta^{(i)}=\{F\in \Delta: \dim(F)\leq i\}$.

\begin{Lemma}\label{CM skeleton}\cite{H}
Let $\Delta$ be a simplicial complex of dimension $d-1$ and $\Delta^{(i)}$ the $i-$th skeleton, for all $0\leq i\leq d-1$. Then:
\begin{enumerate}
	\item [(a)] If $\Delta$ is Cohen--Macaulay, then $\Delta^{(i)}$ is Cohen--Macaulay, for all $0\leq i\leq d-1$;
	\item [(b)] $\depth(k[\Delta])=\max\{i+1:k[\Delta^{(i)}]\mbox{ is Cohen--Macaulay }\}$.
\end{enumerate}
\end{Lemma}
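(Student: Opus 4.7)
My plan is to reduce both parts of the statement to Reisner's criterion for Cohen--Macaulayness of Stanley--Reisner rings, together with two elementary observations about skeletons. First, for every face $F \in \Delta^{(i)}$ one has the identity $\lk_{\Delta^{(i)}}(F) = (\lk_\Delta F)^{(i-|F|)}$, which is immediate from the definitions. Second, since the augmented oriented chain complex of $\Gamma^{(r)}$ is the truncation at degree $r$ of that of $\Gamma$, one has $\tilde{H}_j(\Gamma^{(r)}; k) \cong \tilde{H}_j(\Gamma; k)$ for all $j < r$, while $\tilde{H}_r(\Gamma^{(r)}; k)$ equals the cycle group $Z_r(\Gamma; k)$ and $\tilde{H}_j(\Gamma^{(r)}; k) = 0$ for $j > r$.

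For part (a) I would verify Reisner's criterion for $\Delta^{(i)}$. Fix $F \in \Delta^{(i)}$, so $|F| \leq i + 1$. Since $\Delta$ is Cohen--Macaulay of dimension $d - 1$ and $i \leq d - 1$, purity of $\Delta$ forces $\dim \lk_\Delta F = d - |F| - 1 \geq i - |F|$, and Reisner applied to $\Delta$ yields $\tilde{H}_j(\lk_\Delta F; k) = 0$ for every $j < d - |F| - 1$, in particular for every $j < i - |F|$. Combining this with the truncation fact and the link--skeleton identity, one obtains $\tilde{H}_j(\lk_{\Delta^{(i)}}(F); k) = 0$ for all $j < i - |F| = \dim \lk_{\Delta^{(i)}}(F)$, which is precisely the Reisner condition for $\Delta^{(i)}$.

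For part (b) I would pass through Hochster's formula
\[
\dim_k H^p_\mm(k[\Delta])_{-\sigma} = \dim_k \tilde{H}_{p-|\sigma|-1}(\lk_\Delta \sigma; k), \qquad \sigma \in \Delta,
\]
together with the identity $\depth(k[\Delta]) = \min\{p : H^p_\mm(k[\Delta]) \neq 0\}$. Applying this to $\Delta^{(i)}$ in place of $\Delta$ and combining with the homology comparison between a link and its skeleton from the second observation above, I would establish
\[
\depth(k[\Delta^{(i)}]) = \min\bigl(\depth(k[\Delta]),\, i + 1\bigr).
\]
Since $\dim k[\Delta^{(i)}] = i + 1$, the ring $k[\Delta^{(i)}]$ is Cohen--Macaulay precisely when $i + 1 \leq \depth(k[\Delta])$, and the maximum of $i + 1$ over such $i$ is then exactly $\depth(k[\Delta])$, as claimed.

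The main obstacle is the depth identity in part (b). The bound $\depth(k[\Delta^{(i)}]) \leq i + 1$ is free from the Krull dimension, and the inequality $\depth(k[\Delta^{(i)}]) \leq \depth(k[\Delta])$ follows by tracking link--skeleton homology in the range $j < i - |F|$; but showing that no spurious non-vanishing local cohomology is created at the boundary degree $j = i - |F|$ requires a careful analysis of the cycle group $Z_{i-|F|}(\lk_\Delta F)$ via Hochster, and this boundary case is where the real bookkeeping lies.
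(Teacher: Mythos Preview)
The paper does not prove this lemma at all: it is quoted from Hibi \cite{H} in the preliminaries and used as a black box, so there is no ``paper's own proof'' to compare against.

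Your argument is correct and is in fact the standard one. The concern you raise in the last paragraph is overstated, however. For the identity $\depth(k[\Delta^{(i)}]) = \min\bigl(\depth(k[\Delta]),\, i+1\bigr)$, the inequality $\geq$ only involves degrees $p < \min(\depth(k[\Delta]), i+1) \leq i+1$, which lie strictly below the truncation level, so your homology-preservation observation handles it with no boundary analysis. For the inequality $\leq$, either $\depth(k[\Delta]) \geq i+1$, in which case $\depth(k[\Delta^{(i)}]) \leq \dim k[\Delta^{(i)}] = i+1$ is automatic from Grothendieck non-vanishing, or $\depth(k[\Delta]) \leq i$, in which case the witness face $\sigma$ for $H^{\depth(k[\Delta])}_{\mm}(k[\Delta]) \neq 0$ satisfies $|\sigma| \leq \depth(k[\Delta]) \leq i$ (reduced homology forces the index $\geq -1$), so $\sigma \in \Delta^{(i)}$ and the relevant link homology sits strictly below the truncation. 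No examination of the cycle group $Z_{i-|F|}$ is ever needed.
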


An important class of squarefree monomial ideals consists of edge ideals. 

\begin{Definition}\rm
The \textit{edge ideal} $I(G)$ associated with the graph $G=(V(G),E(G))$ is the squarefree monomial ideal in $S$ generated by all the monomials $x_ix_j$, with $\{i,j\}\in E(G)$.
\end{Definition}

Let $G=(V(G),E(G))$ be a graph. A subset $A\subset V(G)$ is called a \textit{minimal vertex cover} of $G$ if every edge of $G$ is incident to one vertex in $A$ and there is no proper subset of $A$ with this property.   

There is a strong relation between the minimal vertex covers of a graph and the minimal prime ideals of the edge ideal.

\begin{Proposition}\cite{Vi}\label{min vertex cover}
Let $G$ be a graph on the vertex set $[n]$. If $P\subset S$ is the ideal generated by $A=\{x_{i_1},\ldots,x_{i_r}\}$, then $P$ is a minimal prime ideal of $I(G)$ if and only if $A$ is a minimal vertex cover of $G$.
\end{Proposition}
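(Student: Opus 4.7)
The plan is to reduce the proposition to the elementary combinatorial statement that a prime generated by a subset of variables contains $I(G)$ if and only if the corresponding set of indices is a vertex cover of $G$. The key standing fact I rely on is that every minimal prime of a squarefree monomial ideal is generated by a subset of the variables, since such an ideal equals its own radical and its minimal primes are monomial primes, which in the squarefree case must be variable primes.

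First I would establish: for $P=(x_i : i\in A)$, the inclusion $I(G)\subset P$ is equivalent to $A$ being a vertex cover of $G$. The forward direction uses primality of $P$: for each edge $\{i,j\}\in E(G)$, the generator $x_ix_j$ of $I(G)$ lies in $P$, so $x_i\in P$ or $x_j\in P$, meaning $i\in A$ or $j\in A$. Conversely, if $A$ covers every edge, then each generator $x_ix_j$ of $I(G)$ has at least one factor in $P$, hence $I(G)\subset P$.

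Next I would translate minimality. Suppose $P$ is a minimal prime of $I(G)$. By the first step, $A$ is a vertex cover. If some proper subset $A'\subsetneq A$ were also a vertex cover, then $P'=(x_i : i\in A')$ would be a prime strictly contained in $P$ and still containing $I(G)$, contradicting minimality of $P$; hence $A$ is a minimal vertex cover. Conversely, if $A$ is a minimal vertex cover, then $P$ is a prime containing $I(G)$; any prime strictly smaller than $P$ contained in the minimal primes of $I(G)$ must again be of the form $(x_i : i\in A')$ with $A'\subsetneq A$, and by minimality of $A$ such an $A'$ fails to cover some edge, so $P'$ does not contain $I(G)$.

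There is no real obstacle beyond the initial observation that minimal primes of a squarefree monomial ideal are variable primes; the remainder is a direct dictionary between prime containment and set containment of vertex covers, with primality of $P$ providing the crucial ``$x_i\in P$ or $x_j\in P$'' step that connects the two sides.
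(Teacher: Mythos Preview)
Your argument is correct and is the standard proof of this well-known fact. Note, however, that the paper does not supply its own proof of this proposition: it is stated in the Preliminaries section with a citation to Villarreal's book \cite{Vi} and used as background. So there is no ``paper's proof'' to compare against; your write-up is essentially the argument one finds in the cited reference.

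One small point of phrasing in your converse direction: when you say ``any prime strictly smaller than $P$ contained in the minimal primes of $I(G)$ must again be of the form $(x_i : i\in A')$,'' what you need (and what you clearly intend) is the following. Since $P\supset I(G)$, there exists a minimal prime $Q$ of $I(G)$ with $Q\subseteq P$; by the standing fact, $Q=(x_i:i\in A')$ for some $A'\subseteq A$, and then minimality of $A$ as a vertex cover forces $A'=A$, i.e.\ $Q=P$, so $P$ is itself a minimal prime. Stating it this way avoids any ambiguity about whether an arbitrary prime between $I(G)$ and $P$ is generated by variables.
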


\section{Primary decomposition for completely squarefree lexsegment ideals}

The goal of this section is to determine the minimal primary decomposition of a completely squarefree lexsegment ideal. By completely squarefree lexsegment ideal we mean a  squarefree lexsegment ideal whose squarefree shadow remains a squarefree lexsegment ideal. The squarefree shadow of a set T of monomials in $S=k[x_1,\ldots,x_n]$ is the set $\shad(T)=\{x_im:m\in T, x_i\nmid m,1\leq i\leq n\}$. Initial squarefree lexsegment ideals are an example of completely squarefree lexsegment ideals. Moreover, it is known that any completely squarefree lexsegment ideal $I=(L(u,v))$ can be written as the intersection of the initial squarefree lexsegment ideal $(L^i(v))$ with the final squarefree lexsegment ideal $(L^f(u))$, by \cite{B}. Thus we will firstly determine the minimal primary decomposition for initial and final squarefree lexsegment ideals. 

Let $I=(L^i(v))\subset k[x_1,\ldots,x_n]$ be an initial squarefree lexsegment ideal, where $v=x_{j_1}\cdots x_{j_q}$. We may assume that $j_1\geq 2$. Otherwise, if $j_1=1$, then $I=(x_1)\cap (L(x_2\cdots x_q,v/x_1))$ and the problem reduces to compute the minimal primary decomposition of an initial squarefree lexsegment ideal in a fewer number of variables. 

\begin{Theorem}\label{prim dec initial}
Let $I\subset k[x_1,\ldots,x_n]$ be the initial squarefree lexsegment ideal generated in degree $q$, determined by the monomial $v=x_{j_1}\cdots x_{j_q}$, with $2\leq j_1<\ldots<j_q\leq n$. Consider the sets $A_t=[j_t]\setminus\{j_1,\ldots,j_{t-1}\}$, for $1\leq t\leq q$. Then $I$ has the minimal primary decomposition of the form:
$$I=\left(\bigcap\limits_{t=1}^{q}(x_i\ :\ i\in A_t)\right)\cap\left(\bigcap\limits_\twoline{F\subset[n],\ |F|=q-1}{F\cap A_t\neq\emptyset,\ \forall t}P_{F^c}\right).$$  
\end{Theorem}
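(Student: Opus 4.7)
The plan is to identify the facets of the Stanley--Reisner complex $\Delta$ of $I$ and then apply Proposition \ref{St Reisner ideal decomposition}. The key combinatorial lemma, which I would prove first, is that for a $q$-subset $G=\{i_1<\cdots<i_q\}\subset[n]$, the monomial $x_{i_1}\cdots x_{i_q}$ belongs to $L^i(v)$ if and only if $G\cap A_t\neq\emptyset$ for every $t\in[q]$. Both directions reduce to comparing the sorted tuple $(i_1,\ldots,i_q)$ with $(j_1,\ldots,j_q)$ at the first index $s$ where they differ: in the forward direction, $j_1,\ldots,j_{s-1}$ together with $i_s$ (which satisfies $j_{s-1}<i_s<j_s$) furnish witnesses for $A_1,\ldots,A_s$, and the single element $i_s$ further witnesses every $A_r$ with $r>s$, because $i_s<j_s<j_r$ and $i_s\neq j_1,\ldots,j_{r-1}$; conversely, if $i_s>j_s$ at the first differing index, then every element of $G$ either lies in $\{j_1,\ldots,j_{s-1}\}$ or strictly exceeds $j_s$, forcing $G\cap A_s=\emptyset$. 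The equality case $G=\{j_1,\ldots,j_q\}$ is immediate.

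With this in hand, a subset $F\subset[n]$ with $|F|\geq q$ is a face of $\Delta$ precisely when its smallest $q$ elements miss some $A_t$, equivalently when $F\subset F_t:=[n]\setminus A_t$ for some $t$. A short calculation using $j_t\leq n-q+t$ shows $|F_t|\geq q-1$, so each $F_t$ is an eligible candidate facet. To verify maximality, I would take an arbitrary $i\in A_t$, identify the smallest $q$-subset of $F_t\cup\{i\}$ as $G=\{j_1,\ldots,j_{t-1},i,j_t+1,\ldots,j_t+q-t\}$, and check $G\cap A_r\neq\emptyset$ for every $r$. The cases $r\leq t$ are immediate; for $r>t$, a pigeonhole comparison of the $r-t-1$ exclusions $\{j_{t+1},\ldots,j_{r-1}\}$ against the at least $r-t$ elements of $G\cap[j_t+1,j_r]$ forces at least one survivor in $A_r$.

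The remaining facets must have size $q-1$. Any $(q-1)$-subset $F$ is automatically a face, and is a facet precisely when every augmentation $F\cup\{i\}$ (with $i\notin F$) contains a generator, which by the lemma means: for each $r$, either $i\in A_r$ or $F\cap A_r\neq\emptyset$. If $F$ meets every $A_t$, this holds unconditionally; otherwise $F\cap A_{t_0}=\emptyset$ forces $F=F_{t_0}$ with $|F_{t_0}|=q-1$, a facet already listed. Proposition \ref{St Reisner ideal decomposition} now yields the claimed decomposition, and minimality follows from pairwise incomparability of the facets: distinct $F_t,F_{t'}$ are separated by $j_t\in A_t\setminus A_{t'}$ and $j_{t'}\in A_{t'}\setminus A_t$; a Type~1 facet $F_t$ avoids $A_t$ whereas every Type~2 facet meets it; and two distinct Type~2 facets share the same cardinality $q-1$. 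The main obstacle I anticipate is the key lemma---specifically verifying that $i_s$ alone witnesses every $A_r$ with $r>s$---together with the counting step in the maximality argument for $F_t$, which becomes tight whenever many $j_r$'s accumulate near $j_t+1$.
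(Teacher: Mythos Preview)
Your proposal is correct and follows the same strategy as the paper: determine the facets of the Stanley--Reisner complex $\Delta$ and apply Proposition~\ref{St Reisner ideal decomposition}. Your key lemma packages uniformly what the paper establishes by two separate direct arguments (the condition $x_Gx_{\max([n]\setminus G)}\geq_{\mathrm{lex}}v$ for $(q-1)$-sets, and an inductive scan of the smallest indices $l_1,l_2,\ldots$ for larger faces), and your explicit incomparability check for minimality is an addition the paper leaves implicit; the two points you flag as obstacles both go through exactly as you outline.
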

  
\begin{proof}
Let $\Delta$ be the simplicial complex on the vertex set $[n]$ such that $I=I_{\Delta}$. We show that the facets of $\Delta$ are exactly the sets $[n]\setminus A_t$, $1\leq t\leq q$, together with the sets $F\subset [n]$ with $|F|=q-1$ and such that $F\cap A_t\neq\emptyset$ for all $t$. By applying Proposition \ref{St Reisner ideal decomposition} we next get the desired formula.

In the first place, we observe that all the sets $G\subset [n]$ with $|G|=q-1$ are faces of $\Delta$ since $x_G\notin I$. Therefore, the facets of $\Delta$ have the cardinality at least $q-1$. Secondly, we note that $G$ is a facet of $\Delta$ if and only if $x_G\notin I$ and $x_{G\cup\{i\}}\in I$, for all $i\in [n]\setminus G$.

Let $G$ be a subset of the set $[n]$ with $|G|=q-1$. Then $G$ is a facet of $\Delta$ if and only if $x_{G\cup\{i\}}\in I$, for all $i\in[n]\setminus G$, which is equivalent to $x_Gx_{\max([n]\setminus G)}\geq_{lex} v$. We show that this last condition is equivalent to $G\cap A_t\neq\emptyset$, for all $t$. It is obvious that if $G\cap A_t=\emptyset$, for some $1\leq t\leq q$, then $G\subset \{j_1,\ldots ,j_{t-1}\}\cup\{j_t+1,\ldots,n\}$, hence $x_Gx_{\max([n]\setminus G)}<_{lex} v$, a contradiction. Conversely, let $G\cap A_t\neq\emptyset$, for all $1\leq t\leq q$ and assume that $x_Gx_{\max([n]\setminus G)}<_{lex} v$. Then $x_G\leq_{lex}v/x_{\max(v)}=x_{j_1}\cdots x_{j_{q-1}}$, that is, either $x_G=x_{j_1}\cdots x_{j_{q-1}}$, which is imposible since $G\cap A_q\neq\emptyset$, or $x_G<_{lex}x_{j_1}\cdots x_{j_{q-1}}$, which implies that there exists $1\leq t\leq q-1$ such that $G\cap A_t=\emptyset$, again a contradiction.  

Next, we look at the facets of cardinality greater than or equal to $q$. Let $G\subset[n]$ be such a facet and $x_G=x_{l_1}\cdots x_{l_r}$, with $r\geq q$. It is clear that if $l_1<j_1$, then $x_G\in I$, thus $l_1\geq j_1$. The only facet with $l_1>j_1$ is $G=\{j_1+1,\ldots, n\}=[n]\setminus A_1$. Assume now that $l_1=j_1$. Then we obtain $l_2\geq j_2$. The only facet with $l_2>j_2$ is $G=\{j_1,j_2+1,\ldots, n\}=[n]\setminus A_2$. By applying this argument step by step, it follows that the facets of cardinality greater than or equal to $q$ are exactly $[n]\setminus A_t$, with $t=1,\ldots, q$.
\end{proof}


\begin{Corollary}
Let $I=(L^i(v))\subset S$ be an initial squarefree lexsegment ideal, where $v=x_{j_1}\cdots x_{j_q}$, and $\Delta$ be the simplicial complex on the vertex set $[n]$ such that $I=I_{\Delta}$. The simplicial complex $\Delta$ is pure if and only if $v=x_{n-q+1}\cdots x_n$. Moreover, $\Delta$ is a pure simplicial complex if and only if $\Delta$ is Cohen--Macaulay.
\end{Corollary}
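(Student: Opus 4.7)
My plan is to work directly from the facet list produced by Theorem \ref{prim dec initial}. The facets of $\Delta$ split into two families: the sets $[n]\setminus A_t$ for $1\leq t\leq q$, and the $(q-1)$-subsets $F\subset [n]$ with $F\cap A_t\neq\emptyset$ for every $t$. Since the $j_s$ are strictly increasing, one has $|A_t|=j_t-(t-1)$, so a facet of the first family has cardinality $n-j_t+t-1$, while every facet of the second family has cardinality $q-1$.

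The first step is to confirm that facets of the second family are never vacuous: under the standing assumption $j_1\geq 2$, the element $1$ belongs to every $A_t$ (indeed $1\in [j_t]$ and $1\notin\{j_1,\ldots,j_{t-1}\}$), hence $F=\{1,2,\ldots,q-1\}$ is such a facet. Therefore $\Delta$ is pure exactly when $n-j_t+t-1=q-1$ for every $t$, equivalently $j_t=n-q+t$ for every $t$, which is precisely the statement $v=x_{n-q+1}\cdots x_n$.

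For the Cohen--Macaulay equivalence, the nontrivial direction is pure $\Rightarrow$ CM. When $v=x_{n-q+1}\cdots x_n$ is the lex-smallest squarefree monomial of degree $q$, one has $L^i(v)=\Mon_q^s(S)$, so $I=I_{n,q}$ and $\Delta$ is the $(q-2)$-skeleton of the full simplex on $[n]$. Lemma \ref{CM skeleton}(a) applied to the simplex (which is trivially Cohen--Macaulay) then yields that this skeleton is Cohen--Macaulay. The reverse direction is the standard observation that a Cohen--Macaulay Stanley--Reisner complex is unmixed, hence pure.

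No real obstacle is expected; the only point that requires genuine care is the nonemptiness of the second family of facets, since it is their cardinality $q-1$ that pins down the common dimension and thus forces the prescribed form of $v$.
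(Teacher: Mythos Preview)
Your proof is correct and follows essentially the same route as the paper: both read off the facet cardinalities from Theorem \ref{prim dec initial} and equate them to obtain $j_t=n-q+t$, then invoke the Cohen--Macaulayness of $I_{n,q}$ for the second equivalence. You are more explicit than the paper in verifying that the second family of facets is nonempty (and in deriving the Cohen--Macaulayness of the $(q-2)$-skeleton via Lemma \ref{CM skeleton}(a) rather than citing it as known), but the underlying argument is the same.
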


\begin{proof}
It is known that any Cohen--Macaulay simplicial complex is pure. 

The simplicial complex $\Delta$ is pure if all its facets have the same dimension. Using Proposition \ref{St Reisner ideal decomposition}, we obtain that $n-j_1=n-j_2+1=\ldots=n-j_q+q-1=q-1$. Therefore the monomial $v$ must have the form $v=x_{n-q+1}\cdots x_n$. 

Since the ideal generated by all the squarefree monomials of degree $q$ is Cohen--Macaulay, it results that $\Delta$ is a pure simplicial complex if and only if $\Delta$ is Cohen--Macaulay.
\end{proof}

The following result appears in \cite{AHH}, but we present it as a consequence of the minimal primary decomposition.

\begin{Corollary}\label{dim}
In the hypothesis of Theorem \ref{prim dec initial}, the dimension of the Stanley--Reisner ring of $\Delta$ is $n-j_1$.
\end{Corollary}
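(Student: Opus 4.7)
The plan is to read off the dimension directly from the facet structure identified in the proof of Theorem~\ref{prim dec initial}. Recall that $\dim k[\Delta] = \dim \Delta + 1$ equals the maximum cardinality of a facet of $\Delta$, so it suffices to determine the largest facet.

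From Theorem~\ref{prim dec initial} the facets of $\Delta$ fall into two families. The first family consists of the sets $[n]\setminus A_t$ for $1\le t\le q$. Since $A_t=[j_t]\setminus\{j_1,\ldots,j_{t-1}\}$ has cardinality $j_t-(t-1)$, we obtain
\[
|[n]\setminus A_t|=n-j_t+t-1.
\]
The second family consists of certain subsets $F\subset[n]$ with $|F|=q-1$, so each such facet has cardinality $q-1$.

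I would then compare these cardinalities. Using $j_1<j_2<\cdots<j_q$, one has $j_t\ge j_1+(t-1)$, hence $n-j_t+t-1\le n-j_1$, with equality when $t=1$. Thus the maximum over the first family is $n-j_1$, attained by $[n]\setminus A_1=\{j_1+1,\ldots,n\}$. For the second family, since $j_q\le n$ and $j_q\ge j_1+q-1$, one gets $q-1\le n-j_1$, so no facet in this family exceeds the bound either. Consequently $\max |F|=n-j_1$, which gives $\dim k[\Delta]=n-j_1$.

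There is no real obstacle here: the entire content was already done in Theorem~\ref{prim dec initial}, and this corollary is the straightforward numerical extraction. The only minor point is checking that the ``large'' facets $[n]\setminus A_t$ all dominate the ``small'' ones of size $q-1$, which follows from the inequality $j_q\le n$ combined with strict increase of the $j_t$'s.
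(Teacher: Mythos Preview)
Your proof is correct and is essentially the same as the paper's, just viewed dually: the paper computes $\height I=\min\{j_1,j_2-1,\ldots,j_q-(q-1),n-q+1\}=j_1$ from the primary decomposition and then takes $\dim(S/I)=n-\height I$, whereas you compute the maximal facet size directly. The inequalities you check, $j_t-(t-1)\ge j_1$ and $n-q+1\ge j_1$, are precisely those used (implicitly) in the paper to identify $j_1$ as the minimum height.
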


\begin{proof}
The height of the Stanley--Reisner ideal associated to $\Delta$ is $\height I=\min\{j_1,j_2-1,\ldots,j_q-(q-1),n-q+1\}=j_1$. Hence $\dim k[\Delta]=\dim(S/I)=n-j_1$.  
\end{proof}

\begin{Corollary}\label{depth}
If $I\subset S$ is an initial squarefree lexsegment ideal generated in degree $q$, then $\depth(S/I)=q-1$.
\end{Corollary}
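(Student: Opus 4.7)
My plan is to apply Hibi's skeleton criterion (Lemma~\ref{CM skeleton}(b)) to the Stanley--Reisner complex $\Delta$ of $I$. Since $I$ is generated in degree $q$, every subset of $[n]$ of cardinality at most $q-1$ corresponds to a squarefree monomial of degree less than $q$, and hence is a face of $\Delta$ (this was also noted in the proof of Theorem~\ref{prim dec initial}). Thus the $(q-2)$-skeleton $\Delta^{(q-2)}$ equals the $(q-2)$-skeleton of the full $(n-1)$-simplex on $[n]$, whose Stanley--Reisner ideal is $I_{n,q}$ and which is well known to be Cohen--Macaulay (indeed, shellable). Lemma~\ref{CM skeleton}(b) therefore gives $\depth(S/I)\geq q-1$.

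For the upper bound I would follow the normalization from the paragraph preceding Theorem~\ref{prim dec initial} and assume $j_1\geq 2$. If $v=x_{n-q+1}\cdots x_n$, then Corollary~\ref{dim} yields $\dim(S/I)=q-1$, so $\depth(S/I)\leq q-1$ trivially. Otherwise $\dim\Delta\geq q-1$, and I would show that $\Delta^{(q-1)}$ is not Cohen--Macaulay by verifying it is not pure. On one hand, since $v>_{lex}x_{n-q+1}\cdots x_n$, the set $\{n-q+1,\ldots,n\}$ is a face of $\Delta$ of size $q$, and hence a facet of $\Delta^{(q-1)}$ of dimension $q-1$. On the other hand, I claim $F=\{1,2,\ldots,q-1\}$ is a facet of $\Delta$: for every $i\in[n]\setminus F$, the monomial $x_Fx_i=x_1x_2\cdots x_{q-1}x_i$ begins with $x_1$ while $v$ begins with $x_{j_1}$ and $j_1\geq 2$, so $x_Fx_i>_{lex}v$ and thus $x_Fx_i\in I$. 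Hence $F$ is also a facet of $\Delta^{(q-1)}$, of dimension $q-2$. Consequently $\Delta^{(q-1)}$ has facets of two distinct dimensions, so it is not pure and in particular not Cohen--Macaulay. A second application of Lemma~\ref{CM skeleton}(b) now gives $\depth(S/I)\leq q-1$, and equality follows.

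The one delicate point is confirming that $F=\{1,\ldots,q-1\}$ is genuinely a facet of $\Delta$; this is precisely where the standing hypothesis $j_1\geq 2$ is used, since it guarantees that the leading variable $x_1$ of $x_Fx_i$ strictly dominates the leading variable $x_{j_1}$ of $v$ in the lex order. Everything else is a direct application of Hibi's criterion and the standard fact that skeletons of a simplex are Cohen--Macaulay.
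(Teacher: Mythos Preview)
Your argument is correct and follows the same overall strategy as the paper: use Hibi's criterion (Lemma~\ref{CM skeleton}), observe that $\Delta^{(q-2)}$ is the full $(q-2)$-skeleton of the simplex (so Cohen--Macaulay) for the lower bound, and show $\Delta^{(q-1)}$ is not pure for the upper bound. The one difference is in how non-pureness is established: the paper invokes Theorem~\ref{prim dec initial} to list all facet cardinalities of $\Delta$ and reads off that both $q-1$ and some larger value occur, whereas you bypass the primary decomposition entirely by exhibiting the two explicit witnesses $\{1,\ldots,q-1\}$ (a $(q-2)$-dimensional facet of $\Delta$) and $\{n-q+1,\ldots,n\}$ (a $q$-face of $\Delta$, hence a $(q-1)$-dimensional facet of $\Delta^{(q-1)}$). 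Your route is marginally more self-contained, since it does not rely on the full facet description of Theorem~\ref{prim dec initial}; the paper's route, on the other hand, makes the dependence on the structure theorem explicit and gives more information about the shape of $\Delta$.
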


\begin{proof}
The claim is obvious if $I= I_{n,q}$. We assume that $I\neq I_{n,q}$. Since $I$ is generated by squarefree monomials of degree $q$, we have that any subset $F$ of $[n]$, with $|F|\leq q-1$, is a face of $\Delta$. This implies that $I_{\Delta^{(q-2)}}$ is Cohen--Macaulay, because it is generated by all the monomials of degree $q$. By Lemma \ref{CM skeleton} we have that $\depth (S/I)\geq q-1$.  

The facets of $\Delta$ are of cardinality $n-j_1,n-j_2+1,\ldots,n-j_q+q-1,q-1$, with $n-j_1\geq n-j_2+1\geq\ldots\geq n-j_q+q-1>q-1$. It results that $\Delta^{(q-1)}$ is not a pure simplicial complex, therefore it is not Cohen--Macaulay. By Lemma \ref{CM skeleton}, we obtain $\depth (S/I)\leq q-1$. 
\end{proof}

As a consequence of the Proposition \ref{min vertex cover}, we obtain: 

\begin{Corollary}\label{min vertex cover initial}
Let $G$ be the graph with the edge ideal $I(G)$, where $I(G)$ is the initial squarefree lexsegment ideal generated in degree $2$, determined by the monomial $v=x_{j_1}x_{j_2}$. Then 
$$I(G)=(x_i\ :\ i\in A_1)\cap(x_i\ :\ i\in A_2)\cap\left(\bigcap\limits_{i=1}^{j_1-1}P_{[n]\setminus\{i\}}\right),$$
where $A_1=[j_1],\ A_2=[j_2]\setminus\{j_1\}$. The sets $A_1,\ A_2$ and $[n]\setminus\{i\}$, for $i=1,\ldots ,j_1-1$, are the minimal vertex covers of $G$.
\end{Corollary}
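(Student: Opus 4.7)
The plan is to specialize Theorem \ref{prim dec initial} to the case $q=2$ and then translate the resulting minimal primary decomposition into a statement about minimal vertex covers via Proposition \ref{min vertex cover}. Since the setup already gives us $I(G)=(L^i(v))$ with $v=x_{j_1}x_{j_2}$, everything we need is readily available.

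First I would apply Theorem \ref{prim dec initial} with $q=2$. In this case the relevant sets are $A_1=[j_1]$ and $A_2=[j_2]\setminus\{j_1\}$, so the two primes of the form $(x_i : i\in A_t)$ appear immediately. For the remaining primes $P_{F^c}$, the theorem requires $F\subset [n]$ with $|F|=q-1=1$ and $F\cap A_t\neq\emptyset$ for $t=1,2$. Writing $F=\{i\}$, this condition becomes $i\in A_1\cap A_2=[j_1]\cap\bigl([j_2]\setminus\{j_1\}\bigr)=[j_1-1]$. Consequently the primes contributed by this second family are exactly $P_{[n]\setminus\{i\}}$ for $i=1,\ldots,j_1-1$, which yields the asserted decomposition
$$
I(G)=(x_i\ :\ i\in A_1)\cap(x_i\ :\ i\in A_2)\cap\left(\bigcap\limits_{i=1}^{j_1-1}P_{[n]\setminus\{i\}}\right).
$$

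For the second assertion I would invoke Proposition \ref{min vertex cover}: since Theorem \ref{prim dec initial} produces the \emph{minimal} primary decomposition, each of the prime ideals above is a minimal prime of $I(G)$, and by that proposition the corresponding sets of variable-indices are precisely the minimal vertex covers of $G$. Hence $A_1$, $A_2$, and $[n]\setminus\{i\}$ for $i=1,\ldots,j_1-1$ exhaust the minimal vertex covers of $G$.

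I do not expect any real obstacles here; the only point requiring care is the bookkeeping that turns the index condition $F\cap A_t\neq\emptyset$ into the intersection $A_1\cap A_2=[j_1-1]$, and the observation that the minimality built into Theorem \ref{prim dec initial} is what allows the direct translation to minimal vertex covers rather than merely to some family of prime covers.
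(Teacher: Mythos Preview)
Your proposal is correct and is exactly the argument the paper intends: the corollary is stated immediately after ``As a consequence of the Proposition~\ref{min vertex cover}'' with no further proof, so specializing Theorem~\ref{prim dec initial} to $q=2$ and then applying Proposition~\ref{min vertex cover} is precisely the intended route. Your computation $A_1\cap A_2=[j_1-1]$ and the translation from minimal primes to minimal vertex covers are both handled correctly.
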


\begin{Corollary}
Let $I\subset S$ be the initial squarefree lexsegment ideal generated in degree $q$, determined by the monomial $v=x_{j_1}\cdots x_{j_q}$, with $2\leq j_1<\ldots<j_q\leq n$, $v\neq x_{n-q+1}\cdots x_n$, and $\Delta$ the simplicial complex with the Stanley--Reisner ideal $I$. If $s$ is the unique integer such that $j_i=j_1+(i-1)$, for all $1\leq i< s$ and $j_s\geq j_1+s$, then the multiplicity of $k[\Delta]$ is $e(k[\Delta])=s-1$.
\end{Corollary}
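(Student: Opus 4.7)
The plan is to combine Corollary \ref{dim} with Lemma \ref{multiplicity} and the description of the facets of $\Delta$ extracted from the proof of Theorem \ref{prim dec initial}. By Corollary \ref{dim}, $\dim k[\Delta]=n-j_1$, so setting $d=n-j_1$ the multiplicity $e(k[\Delta])$ equals $f_{d-1}$, i.e., the number of facets of $\Delta$ of maximum cardinality $n-j_1$.

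The proof of Theorem \ref{prim dec initial} identifies two kinds of facets: the sets $[n]\setminus A_t$ for $1\leq t\leq q$, whose cardinality is $n-j_t+(t-1)$, and certain subsets $F\subset [n]$ of cardinality $q-1$. First I would verify that the facets of cardinality $q-1$ do not contribute to $f_{d-1}$. Since $v\neq x_{n-q+1}\cdots x_n$, the monotonicity $2\leq j_1<\cdots <j_q\leq n$ forces $j_1<n-q+1$, so $n-j_1>q-1$ and only the facets of the first type attain the maximal cardinality.

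Next I would count how many of the sets $[n]\setminus A_t$ have cardinality exactly $n-j_1$. This happens precisely when $j_t-(t-1)=j_1$, that is, when $j_t=j_1+(t-1)$. By the definition of $s$ this equality holds for $t=1,2,\ldots,s-1$ and fails for $t=s$ (since $j_s\geq j_1+s$); moreover, because the sequence $(j_t-(t-1))_{t}$ is nondecreasing (as $j_t\geq j_{t-1}+1$), once it strictly exceeds $j_1$ at $t=s$ it cannot drop back, so the equality fails for every $t\geq s$ as well.

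Thus there are exactly $s-1$ facets of cardinality $n-j_1$, and Lemma \ref{multiplicity} yields $e(k[\Delta])=f_{d-1}=s-1$. The only mildly delicate step is the monotonicity argument and the exclusion of the $(q-1)$-cardinality facets; both are short, so I do not anticipate a substantial obstacle.
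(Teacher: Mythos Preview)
Your proposal is correct and follows essentially the same route as the paper: use Lemma \ref{multiplicity} together with Corollary \ref{dim} to identify $e(k[\Delta])$ with the number of facets of cardinality $n-j_1$, then count those among the $[n]\setminus A_t$ via the condition $|A_t|=j_t-(t-1)=j_1$. Your explicit exclusion of the $(q-1)$-facets (using $v\neq x_{n-q+1}\cdots x_n$) and the monotonicity argument for $t\geq s$ are spelled out more carefully than in the paper, but the argument is the same.
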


\begin{proof}
By Lemma \ref{multiplicity} and  Corollary \ref{dim}, the multiplicity of $k[\Delta]$ is $e(k[\Delta])=f_{n-j_1-1}$. Therefore, we need to determine the number of facets of $\Delta$ of cardinality $n-j_1$. This is equivalent, by Theorem \ref{prim dec initial}, to determine the number of minimal prime ideals which contains $I$, with $\height(p)=j_1$ and $j_1\neq n-q+1$. In the notations of Theorem \ref{prim dec initial}, if $s$ is the unique integer such that $j_i=j_1+(i-1)$, for all $1\leq i< s$ and $j_s\geq j_1+s$, then $|A_i|=j_i-(i-1)=j_1$, for all $1\leq i\leq s-1$ and $|A_i|=j_i-(i-1)>j_1$, for $i\geq s$. Therefore, $e(k[\Delta])=s-1$.   
\end{proof}


Next, we discuss the case of a final squarefree lexsegment ideals $(L^f(u))$, where $u$ is a monomial in $S$. Note that we may reduce to the hypothesis $x_1\mid u$. Indeed, otherwise, $x_1,\ldots,x_{\min(u)-1}$ are regular on $S/I$, hence they do not belong to any associated prime of $I$. Therefore, computing the primary decomposition of $I$ in $S$ is equivalent to compute the primary decomposition of $I\cap k[x_{\min(u)},\ldots,x_n]$.  

\begin{Proposition}\label{deg-dual}
Let $I=(L^f(u))\subset S$ be the final squarefree lexsegment ideal determined by the monomial $u=x_1x_{i_2}\cdots x_{i_q}$, $2\leq i_2<\ldots<i_q\leq n$. We denote by $\Delta$ the simplicial complex with $I=I_{\Delta}$. Then the Stanley--Reisner ideal of the Alexander dual of $\Delta$, $I^\vee$, is generated in degree $n-q$ and $n-q+1$.
\end{Proposition}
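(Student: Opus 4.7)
The plan is to reduce the assertion about generator degrees of $I^{\vee}$ to a statement about facet cardinalities of $\Delta$, and then bound those cardinalities using the hypothesis $x_1\mid u$. Recall that for any simplicial complex with standard primary decomposition $I_\Delta=\bigcap_{F\in\mathcal F(\Delta)}P_{F^c}$ (as in Proposition \ref{St Reisner ideal decomposition}), the Alexander dual ideal $I^{\vee}=I_{\Delta^{\vee}}$ is minimally generated by the squarefree monomials $x_{[n]\setminus F}$ with $F\in\mathcal F(\Delta)$. Consequently, the set of degrees of the minimal generators of $I^{\vee}$ equals $\{n-|F|:F\in\mathcal F(\Delta)\}$, and the proposition will follow once I show that every facet of $\Delta$ has cardinality $q-1$ or $q$.

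The lower bound $|F|\geq q-1$ is essentially formal: because $I$ is generated in degree $q$, no generator of $I$ divides $x_G$ when $|G|<q$, so every such $G$ is a face of $\Delta$. Hence every facet contains at least $q-1$ elements, and the corresponding generator of $I^{\vee}$ has degree at most $n-q+1$.

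For the upper bound $|F|\leq q$, I would argue by contradiction. Suppose $F\in\Delta$ with $|F|\geq q+1$. The key observation is that $u$ begins with $x_1$: any squarefree monomial $w$ of degree $q$ whose support does not contain $1$ satisfies $w<_{lex}u$, and therefore $w\in L^f(u)\subset I$. If $1\notin F$, any $q$-subset $G\subset F$ yields $x_G\in I$ dividing $x_F$. If $1\in F$, then $|F\setminus\{1\}|\geq q$, and a $q$-subset of $F\setminus\{1\}$ again produces some $x_G\in I$ dividing $x_F$. In either case $x_F\in I$, contradicting $F\in\Delta$.

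Combining the two bounds gives $|F|\in\{q-1,q\}$ for every facet of $\Delta$, whence every minimal generator of $I^{\vee}$ has degree $n-q$ or $n-q+1$. The only non-routine step is the upper bound on facet sizes, and I do not anticipate a serious obstacle there since the argument is a direct exploitation of the normalization $x_1\mid u$ through the lex order.
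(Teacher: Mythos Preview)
Your proof is correct and follows essentially the same idea as the paper's: both arguments hinge on the observation that, because $x_1\mid u$, every squarefree monomial of degree $q+1$ lies in $I$ (the paper phrases this as $\shad(I)=\Mon^s_{q+1}(S)$, while you argue directly that any $(q{+}1)$-subset contains a $q$-subset avoiding $1$). The only cosmetic difference is that you translate the conclusion into a statement about facet cardinalities of $\Delta$, whereas the paper reasons directly about degrees of elements in $I^{\vee}$; the two viewpoints are dual and the content is the same.
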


\begin{proof}
Since $u=x_1x_{i_2}\cdots x_{i_q}$, one may easily check that $\shad(I)=\Mon^s_{q+1}(S)$. Moreover, $\shad^s(I)=\Mon^s_{q+s}(S)$, for all $s\geq 1$. This implies that all the squarefree monomials of degree greater than or equal to $q+1$ belong to $I$. Hence, $I^{\vee}$ is generated in degree greater than or equal to $n-q$. 

On the other hand, all the monomials $x_G$, with $|G|\leq q-1$ do not belong to $I$, thus all the monomials $x_{[n]\setminus G}$ of degree greater than or equal to $n-q+1$ belong to $I^\vee$.

Therefore, $I^\vee$ is generated in degree $n-q$ and $n-q+1$.     
\end{proof}

The following result gives us the minimal primary decomposition of a final squarefree lexsegment ideal. 

\begin{Theorem}\label{prim dec final}
Let $I\subset S$, $I\neq I_{n,q}$, be the final squarefree lexsegment ideal generated in degree $q$, determined by the monomial $u=x_1x_{i_2}\cdots x_{i_q}$, $2\leq i_2<\ldots<i_q\leq n$. Denote by $F=\{i_2,\ldots,i_q\}$ and by $x_F=\prod\limits_{i\in F}x_i$. Then $I$ has the minimal primary decomposition of the form:
$$I=\left(\bigcap\limits_\twoline{G\subset[n],\ |G|=n-q+1}{x_G\geq_{lex}x_{F^c}}P_G\right)\cap\left(\bigcap\limits_\twoline{G\subset[n]\setminus\{1\},\ |G|=n-q+1}{x_{G\setminus\min(G)}\geq_{lex}x_{F^c\setminus\{1\}}}P_G\right)\cap\left(\bigcap\limits_\twoline{G\subset[n],\ |G|=n-q}{x_{F^c\setminus\{1\}}>_{lex}x_G}P_G\right).$$
\end{Theorem}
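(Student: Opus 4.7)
My plan is to apply Proposition \ref{St Reisner ideal decomposition}: if $\Delta$ denotes the simplicial complex with $I_\Delta = I$, then the minimal primary decomposition is $\bigcap_{H \in \mathcal F(\Delta)} P_{H^c}$, so the problem reduces to classifying the facets of $\Delta$ and matching $G = H^c$ with the three families appearing in the theorem.

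The starting observation I would use is that, by Proposition \ref{deg-dual}, all squarefree monomials of degree $\geq q+1$ belong to $I$, so every face of $\Delta$ has cardinality $\leq q$ and consequently every facet has cardinality $q-1$ or $q$. First I would handle the facets of cardinality $q$: such an $H$ is precisely a subset with $x_H >_{lex} u$, which, since $u$ begins with $x_1$, amounts to $1 \in H$ together with $x_{H \setminus \{1\}} >_{lex} x_F$. Next I would handle the facets of cardinality $q-1$: every such $H$ is a face, so the facet condition is that $H \cup \{j\}$ fails to be a face for every $j \in [n] \setminus H$. I would split this into two cases. If $1 \notin H$, only $j = 1$ matters (for $j \neq 1$, $H \cup \{j\}$ does not contain $1$, hence is automatically not a face), and the condition collapses to $x_H \leq_{lex} x_F$. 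If $1 \in H$, every $j \in [n] \setminus H$ must be checked; here I would invoke a monotonicity observation for the lex order (replacing an index in a $(q-1)$-set by a strictly smaller index outside the set produces a lex-larger monomial) to reduce the condition to the single inequality at $j = \min([n] \setminus H) = \min(G)$, yielding $x_{(H \setminus \{1\}) \cup \{\min(G)\}} \leq_{lex} x_F$.

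The last step is to rewrite each of these three conditions in terms of $G = H^c$ so that they match the three families in the statement. For this I will use the elementary lex-complementation principle: for subsets $A, B$ of a common set $T$ with $|A| = |B|$, $x_A \leq_{lex} x_B$ if and only if $x_{T \setminus A} \geq_{lex} x_{T \setminus B}$. Applied with $T = [n] \setminus \{1\}$ to the size-$q$ facets, this produces $x_{F^c \setminus \{1\}} >_{lex} x_G$ with $|G| = n-q$ and $1 \notin G$, which is the third family. Applied to the size-$(q-1)$ facets with $1 \in H$ (the monotonicity case), it produces $x_{G \setminus \min(G)} \geq_{lex} x_{F^c \setminus \{1\}}$ with $|G| = n-q+1$ and $1 \notin G$, which is the second family. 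Applied to the size-$(q-1)$ facets with $1 \notin H$, it produces $x_{G \setminus \{1\}} \geq_{lex} x_{F^c \setminus \{1\}}$ with $1 \in G$, which is equivalent to $x_G \geq_{lex} x_{F^c}$; I would further observe that the inequality $x_G \geq_{lex} x_{F^c}$ itself already forces $1 \in G$ (because $\min(F^c) = 1$), so the first family is recovered exactly. Minimality of the decomposition is automatic since distinct facets are pairwise incomparable.

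The main obstacle I anticipate is the bookkeeping for the $(q-1)$-facets: correctly splitting into the two cases $1 \in H$ versus $1 \notin H$, reducing the case $1 \in H$ to a single inequality via the monotonicity of the lex order, and then applying the complementation duality carefully so that the three families of primes stated in the theorem correspond exactly to the three types of facets, with no overlap (verified by the behaviour of $1 \in G$) and no omissions.
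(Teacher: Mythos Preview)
Your proposal is correct and follows essentially the same line as the paper's proof: classify the facets of $\Delta$ by cardinality ($q$ versus $q-1$) using Proposition~\ref{deg-dual}, split the $(q-1)$-facets according to whether $1\in H$, reduce the facet condition to a single lex inequality via the observation that only $j=\min([n]\setminus H)$ needs to be tested, and then pass to $G=H^c$ using lex-complementation. The only cosmetic difference is that for the case $1\notin H$ you argue directly that $H\cup\{j\}$ with $j\neq 1$ is automatically a non-face (since it has cardinality $q$ and misses $1$), whereas the paper subsumes this into the single ``check at $\min([n]\setminus H)$'' reduction; both are equivalent.
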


\begin{proof}
Let $\Delta$ be the simplicial complex on the vertex set $[n]$ such that $I=I_{\Delta}$. By Proposition \ref{deg-dual}, we know that all the facets of $\Delta$ have cardinality $q-1$ or $q$. A facet $H$ of $\Delta$ is characterized by the condition $x_H\notin I$ and $x_{H\cup\{i\}}\in I$, for all $i\in [n]\setminus H$. 

Let $H$ be a subset of $[n]$ of cardinality $q$. Note that, if $|H|=q$, then $x_Hx_i\in \Mon_{q+1}^s(S)=\shad(I)\subset I$, for all $i\in [n]\setminus H$, hence $H$ is a facet of $\Delta$ if and only if $x_H\notin I$. We have $x_H\notin I$ if and only if $x_H>_{lex} u=x_1x_F$, that is $x_{H^c}<_{lex}x_{F^c\setminus\{1\}}$. Therefore, by denoting $G=H^c$, we get the last family of minimal prime ideals in the formula of the theorem.

Now, we look at the facets of cardinality $q-1$. Let $H$ be a subset of $[n]$, with $|H|=q-1$. Then $x_H\notin I$, thus it remains to characterize the sets $H$ with $x_{H\cup\{i\}}\in I$, for all $i\in [n]\setminus H$. This is equivalent to $x_Hx_{\min([n]\setminus H)}\in I$, that is $u=x_1x_F\geq_{lex}x_Hx_{\min([n]\setminus H)}$. We distinguish two cases:

\textit{Case 1:} If $\min([n]\setminus H)=1$, then $x_1x_F\geq_{lex}x_Hx_{\min([n]\setminus H)}$ is equivalent to $x_F\geq _{lex} x_H$, that is $x_{H^c}\geq_{lex} x_{F^c}$. Therefore, in this case, we get the first family of minimal prime ideals of $I$.

\textit{Case 2:} If $\min([n]\setminus H)>1$, then $1\in H$. By taking the complements in $x_1x_F\geq_{lex}x_Hx_{\min([n]\setminus H)}$, we obtain $x_{([n]\setminus H)\setminus\{\min([n]\setminus H)\}}\geq _{lex} x_{F^c\setminus\{1\}}$, thus, by setting $G=[n]\setminus H$, we get the second family of minimal prime ideals.
\end{proof}

The minimal primary decomposition allows us to compute the Krull dimension of the quotient ring $S/I$.
 
\begin{Corollary}\label{dim final}
If $I=(L^f(u))\subset S$ is the final squarefree lexsegment ideal determined by the monomial $u$, where $u=x_1x_{i_2}\cdots x_{i_q}$, $2\leq i_2<\ldots<i_q\leq n$, then $\dim(S/I)=q$.
\end{Corollary}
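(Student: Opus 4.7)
The plan is to reduce the computation of $\dim(S/I)$ to a height calculation via $\dim(S/I) = n - \height(I)$, and then read off the height from the minimal primary decomposition supplied by Theorem \ref{prim dec final}. Since $\height(I)$ is the minimum of the heights of the minimal primes, and each prime $P_G$ appearing in the decomposition has height exactly $|G|$, I only need to find the smallest $|G|$ among the three families listed.

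Inspecting the three families: the first two consist of primes $P_G$ with $|G| = n - q + 1$, contributing height $n - q + 1$, while the third consists of primes $P_G$ with $|G| = n - q$, contributing (if nonempty) height $n - q$. So provided that the third family is nonempty, we get $\height(I) = n-q$ and therefore $\dim(S/I) = q$. The work thus reduces to verifying that under the implicit hypothesis $I \neq I_{n,q}$ (inherited from Theorem \ref{prim dec final}) the third family indeed contains at least one prime.

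For this, I would exhibit the set $G = \{q+1, q+2, \ldots, n\}$ of cardinality $n - q$. Its associated monomial $x_G = x_{q+1}\cdots x_n$ is the lex-smallest squarefree monomial of degree $n - q$ in $S$. The condition $x_{F^c \setminus \{1\}} >_{lex} x_G$ therefore holds as soon as $F^c \setminus \{1\} \neq \{q+1, \ldots, n\}$. But $F^c \setminus \{1\} = \{q+1, \ldots, n\}$ would force $\{1\} \cup F = [q]$, i.e.\ $u = x_1 x_2 \cdots x_q$, which is precisely the case $I = I_{n,q}$ excluded by hypothesis. Hence $G = \{q+1,\ldots,n\}$ witnesses the nonemptiness of the third family.

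I do not expect any real obstacle; the main (mild) subtlety is just to keep careful track of what lex-largest versus lex-smallest means after passing to complements. The only place this could bite is in misidentifying the smallest squarefree monomial of a given degree, but since the lex order satisfies $x_1 > \cdots > x_n$, the smallest degree-$(n-q)$ squarefree monomial is unambiguously $x_{q+1}\cdots x_n$, and the excluded case aligns exactly with $I = I_{n,q}$, closing the argument.
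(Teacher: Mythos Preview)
Your proposal is correct and follows precisely the route the paper intends: the corollary is stated in the paper without proof, immediately after Theorem~\ref{prim dec final}, with only the remark that the minimal primary decomposition allows one to compute the Krull dimension. Your argument fills in exactly those details---identifying $\height(I)$ as the minimum of $n-q+1$ and $n-q$ over the three families and verifying (via $G=\{q+1,\ldots,n\}$) that the third family is nonempty precisely when $I\neq I_{n,q}$---and is the expected computation.
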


\begin{Corollary}
Let $G$ be the graph with the edge ideal $I(G)$, where $I(G)$ is the final squarefree lexsegment ideal determined by the monomial $u=x_1x_{i_2}$, with $i_2>2$. Then 
$$I(G)=\left(\bigcap\limits_{s\geq i_2}P_{[n]\setminus\{s\}}\right)\cap\left(\bigcap\limits_{s<i_2}P_{[n]\setminus\{1,s\}}\right).$$
The minimal vertex covers of $G$ are the sets $[n]\setminus\{s\}$, with $s\geq i_2$ together with the sets $[n]\setminus\{1,s\}$, with $s<i_2$. 
\end{Corollary}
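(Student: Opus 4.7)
The plan is to derive this corollary by specializing Theorem \ref{prim dec final} to $q=2$ and $u=x_1x_{i_2}$, and then invoking Proposition \ref{min vertex cover}. In this setting $F=\{i_2\}$, so $x_{F^c}=\prod_{j\neq i_2}x_j$ has degree $n-1$ and $x_{F^c\setminus\{1\}}=\prod_{j\neq 1,i_2}x_j$ has degree $n-2$.

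I would analyze each of the three families of primes in Theorem \ref{prim dec final} separately. For the first family, any subset $T\subset[n]$ with $|T|=n-1$ has the form $[n]\setminus\{s\}$; a position-by-position lex comparison yields $x_{[n]\setminus\{s\}}\geq_{lex}x_{[n]\setminus\{i_2\}}$ if and only if $s\geq i_2$, which produces the first intersection in the claimed formula. The second family forces $T=[n]\setminus\{1\}$, and then $x_{T\setminus\min T}=x_3\cdots x_n$ is strictly smaller than $x_{F^c\setminus\{1\}}$ in lex order, because the latter contains $x_2$ as a factor (since $i_2>2$) while the former does not. Hence the second family contributes nothing.

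For the third family, writing $T=[n]\setminus\{a,b\}$ with $a<b$, an analogous comparison shows that $x_{[n]\setminus\{a,b\}}<_{lex}x_{[n]\setminus\{1,i_2\}}$ if and only if the pair $(a,b)$ precedes $(1,i_2)$ in the lex order on pairs; since $a\geq 1$, this forces $a=1$ and $2\leq b<i_2$, producing the primes $P_{[n]\setminus\{1,s\}}$ for $s<i_2$.

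It remains to verify minimality. Primes of the first family all have height $n-1$ and primes of the third all have height $n-2$, so within each family the members are pairwise incomparable. Across families, $P_{[n]\setminus\{1,s'\}}\subset P_{[n]\setminus\{s\}}$ would require $\{s\}\subset\{1,s'\}$, i.e.\ $s\in\{1,s'\}$; this never happens since $s\geq i_2>s'$ and $s\geq i_2>1$. The decomposition is therefore minimal, and the characterization of the minimal vertex covers follows immediately from Proposition \ref{min vertex cover}. The only real pitfall in this argument is keeping the direction of the lex order straight on the complementary monomials.
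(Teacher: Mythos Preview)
Your proof is correct and follows the paper's intended approach: the corollary is stated there without proof, as an immediate specialization of Theorem~\ref{prim dec final} to $q=2$, followed by Proposition~\ref{min vertex cover}. Your explicit case analysis of the three families is exactly how that specialization unfolds; the minimality check you add is redundant (Theorem~\ref{prim dec final} already asserts minimality, via the facet description), but it does no harm. One small remark: in the third family you obtain $2\le s<i_2$, which is the precise range; the corollary's ``$s<i_2$'' should be read with $s\ge 2$ understood.
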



\begin{Corollary}
Let $I\subset S$ be the final squarefree lexsegment ideal determined by the monomial $u=x_1x_{i_2}\cdots x_{i_q}=x_1x_F$, $2\leq i_2<\ldots<i_q\leq n$ and $\Delta$ the simplicial complex with the Stanley-Reisner ideal $I$. Then the multiplicity of $k[\Delta]$ is $e(k[\Delta])=|\{x_G: x_{F^c\setminus\{1\}}>_{lex}x_G, |G|=n-q\}|$.
\end{Corollary}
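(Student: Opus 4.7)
The plan is to identify $e(k[\Delta])$ with a count of top--dimensional faces of $\Delta$ and then translate that count, via a complement in $[n]$, into the count appearing in the statement.

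First I would invoke Corollary \ref{dim final} to record that $\dim(S/I)=q$, so $\Delta$ has dimension $d-1=q-1$. By Lemma \ref{multiplicity}, this reduces the problem to computing $f_{q-1}$, the number of $(q-1)$-dimensional faces of $\Delta$, i.e., subsets $H\subset[n]$ with $|H|=q$ and $x_H\notin I$. Before counting, I would use Proposition \ref{deg-dual} to observe that $\shad^s(I)=\Mon_{q+s}^s(S)$ for all $s\geq 1$, so every squarefree monomial of degree $\geq q+1$ lies in $I$; equivalently, $\Delta$ has no face of cardinality $>q$, and the $f_{q-1}$ faces are automatically facets (no absorption by larger faces can happen).

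Next I would characterize when $H\subset[n]$ with $|H|=q$ gives a face. Since $I$ is generated in degree $q$ by the squarefree monomials $\leq_{lex}u$, a squarefree monomial $x_H$ of degree $q$ belongs to $I$ if and only if $x_H\leq_{lex}u$. Therefore $H$ is a facet exactly when $x_H>_{lex}u=x_1x_F$. At this point the main (very mild) obstacle is the passage to complements: for two squarefree monomials $x_H,x_{H'}$ of the same degree, the inequality $x_H>_{lex}x_{H'}$ is controlled by the smallest index in $H\triangle H'=H^c\triangle H'^c$, and that index belongs to $H$ iff it belongs to $H'^c$. Hence $x_H>_{lex}x_{H'}$ is equivalent to $x_{H'^c}>_{lex}x_{H^c}$.

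Applying this to $u=x_1x_F$, whose support has complement $F^c\setminus\{1\}$ of size $n-q$, the condition $x_H>_{lex}u$ becomes $x_{F^c\setminus\{1\}}>_{lex}x_{H^c}$. Setting $G=H^c$, which has cardinality $n-q$, I would conclude
\[
e(k[\Delta])=f_{q-1}=|\{G\subset[n]\ :\ |G|=n-q,\ x_{F^c\setminus\{1\}}>_{lex}x_G\}|,
\]
which is precisely the formula claimed.
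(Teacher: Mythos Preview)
Your argument is correct. It starts exactly as the paper does---invoke Corollary~\ref{dim final} and Lemma~\ref{multiplicity} to get $e(k[\Delta])=f_{q-1}$---but then diverges slightly in how the count is made. The paper passes through Theorem~\ref{prim dec final}: it identifies $f_{q-1}$ with the number of minimal primes of height $n-q$ and reads that off from the third family in the primary decomposition. You instead characterize the size-$q$ faces directly as the $H$ with $x_H>_{lex}u$ and then use the order-reversing complementation $H\mapsto H^c$ to land on the stated set. This is essentially the same computation that sits inside the proof of Theorem~\ref{prim dec final} (the description of the facets of cardinality $q$), so you have simply bypassed the packaged statement and redone the relevant piece in place. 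Your route is a bit more self-contained; the paper's route exploits the structural result already proved. One small remark: the observation from Proposition~\ref{deg-dual} that every size-$q$ face is a facet is true and pleasant, but not needed for the multiplicity, since $f_{q-1}$ already counts all size-$q$ faces whether or not they are maximal.
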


\begin{proof}
The multiplicity of $k[\Delta]$ is $e(k[\Delta])=f_{q-1}$, by Lemma \ref{multiplicity} and Corollary \ref{dim final}. The number of facets of $\Delta$ of cardinality $q$, that is $f_{q-1}$, is the same with the number of minimal prime ideals of $I$ of $\height(p)=n-q$. By Theorem \ref{prim dec final}, if we denote by $t$ the cardinality of the set $\{x_G: x_{F^c\setminus\{1\}}>_{lex}x_G, |G|=n-q\}$, then the multiplicity of $k[\Delta]$ is $e(k[\Delta])=t$.
\end{proof}

We will use Lemma \ref{CM skeleton} in order to compute the depth of a final squarefree lexsegment ideal.


\begin{Corollary}\label{depth finale}
If $I=(L^f(u))\subset S$ is the final squarefree lexsegment ideal determined by the monomial $u=x_1x_{i_2}\cdots x_{i_q}$, $2\leq i_2<\ldots<i_q\leq n$, then $\depth(S/I)=q-1$.
\end{Corollary}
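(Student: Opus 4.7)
The plan is to mirror the proof of Corollary \ref{depth}, using Hibi's skeletal criterion (Lemma \ref{CM skeleton}) to sandwich $\depth(S/I)$ between $q-1$ from below and $q-1$ from above. First I would dispose of the degenerate case $I = I_{n,q}$, which occurs precisely when $u = x_1 x_2 \cdots x_q$: there $S/I$ is the Stanley--Reisner ring of the $(q-2)$-skeleton of the $(n-1)$-simplex, which is Cohen--Macaulay of dimension $q-1$, so the depth equals $q-1$ trivially. Assume henceforth that $I \neq I_{n,q}$.

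For the lower bound, since $I$ is generated in degree $q$, every subset of $[n]$ of cardinality at most $q-1$ is a face of $\Delta$; consequently $\Delta^{(q-2)}$ is the full $(q-2)$-skeleton of the simplex on $[n]$, whose Stanley--Reisner ideal is $I_{n,q}$. This ideal is Cohen--Macaulay, so Lemma \ref{CM skeleton}(b) gives $\depth(S/I) \geq q-1$.

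For the upper bound I plan to show that $\Delta^{(q-1)}$ is not Cohen--Macaulay by exhibiting facets of $\Delta$ of both cardinalities $q-1$ and $q$, these being the only possibilities by Proposition \ref{deg-dual}. The set $F := \{i_2,\ldots,i_q\}$ will serve as a facet of size $q-1$: the monomial $x_F$ has degree $q-1<q$ and is therefore not in $I$; adjoining the index $1$ recovers $u\in I$; and adjoining any $j\in [n]\setminus(F\cup\{1\})$ produces $x_F x_j$, which lacks the variable $x_1$ and so lies lexicographically below $u$, hence belongs to $I$. On the other hand, $I\neq I_{n,q}$ forces $\{i_2,\ldots,i_q\}\neq\{2,\ldots,q\}$, so $H:=\{1,2,\ldots,q\}$ satisfies $x_H>_{lex} u$ and therefore $x_H\notin I$; adjoining any further index produces a squarefree monomial of degree $q+1$, which lies in $\shad(I)\subset I$ by the computation used in the proof of Proposition \ref{deg-dual}. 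Thus $H$ is a facet of size $q$. Since $\Delta^{(q-1)}$ inherits $F$ and $H$ as facets of distinct sizes, it fails to be pure, hence is not Cohen--Macaulay, and Lemma \ref{CM skeleton}(b) yields $\depth(S/I)\leq q-1$, completing the sandwich.

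The only mildly delicate step is checking that $F=\{i_2,\ldots,i_q\}$ really is a facet, which boils down to the short lex comparison above; once this is verified, the rest of the argument is a direct adaptation of the proof of Corollary \ref{depth}.
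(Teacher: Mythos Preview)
Your proof is correct and follows the same strategy as the paper's: both arguments use Hibi's criterion (Lemma \ref{CM skeleton}), obtain the lower bound exactly as in Corollary \ref{depth}, and establish the upper bound by showing $\Delta^{(q-1)}$ is not pure via the same size-$q$ facet $\{1,\ldots,q\}$. The only difference is cosmetic: for the size-$(q-1)$ facet the paper chooses $\{n-q+2,\ldots,n\}$, whereas you choose $\{i_2,\ldots,i_q\}$; both verifications are one-line lex comparisons.
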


\begin{proof}
We may consider $u\neq x_1\cdots x_q$. Let $\Delta$ be the simplicial complex on the vertex set $\{x_1,\ldots,x_n\}$ with the Stanley-Reisner ideal $I_{\Delta}=I$. 
As in the first part of the proof of Corollary \ref{depth}, we have that $\depth (S/I)\geq q-1$.

We prove that $\Delta ^{(q-1)}$ is not pure. Indeed, because $u\neq x_1\cdots x_q$, we have that $\{x_1,\ldots,x_q\}$ is a facet of $\Delta^{(q-1)}$. We consider $\tau=\{x_{n-q+2},\ldots,x_n\}$, $|\tau|=q-1$. Thus $\tau$ is a maximal face of $\Delta^{(q-1)}$ because all the monomials $x_ix_{n-q+2}\cdots x_n\in I$, for all $1\leq i\leq n-q+1$. Therefore $\Delta ^{(q-1)}$ is not pure. This implies that $k[\Delta ^{(q-1)}]$ is not Cohen--Macaulay and we get $\depth(S/I)\leq q-1$.
\end{proof}

Let $I=(L(u,v))$ be a completely squarefree lexsegment ideal. By \cite{B}, one may write $I=(L^i(v))\cap(L^f(u))$. This allows us to determine the standard primary decomposition of an arbitrary completely squarefree lexsegment ideal.

\begin{Theorem}\label{prim dec completely}
Let $I\subset S$ be a completely squarefree lexsegment ideal generated in degree $q$, determined by the monomials $u=x_1x_F$ and $v=x_{j_1}\cdots x_{j_q}$, $2\leq j_1<\ldots<j_q\leq n$. Then the minimal primary decomposition of $I$ is the following:
$$I=\bigcap\limits_{|A_t|\leq n-q}P_{A_t}\cap\bigcap\limits_\twoline{|A_t|=n-q+1}{u/x_1\geq_{lex} v/x_{j_t}}P_{A_t}\cap\bigcap\limits_\twoline{G\subset[n],\ |G|=q-1,\ G\cap A_t\neq\emptyset,\ \forall t}{u/x_1\geq_{lex} x_G}P_{G^c}\cap\bigcap\limits_\twoline{P\in \Min(L^f_{S}(u))}{\height P=n-q} P,$$
if $x_2\nmid u$, and
$$I=\bigcap\limits_{|A_t|\leq n-q}P_{A_t}\cap\bigcap\limits_\twoline{|A_t|=n-q+1}{u/x_1\geq_{lex} v/x_{j_t}}P_{A_t}\cap\bigcap\limits_\twoline{G\subset[n]\setminus\{1\},\ |G|=q-1,\ G\cap A_t\neq\emptyset,\ \forall t}{u/x_1\geq_{lex} x_G}P_{G^c}\cap$$
$$\cap\bigcap\limits_\twoline{G\subset[n]\setminus\{1\},\ |G|=n-q+1}{x_{G\setminus\min(G)}\geq_{lex}x_{F^c\setminus\{1\}}}P_{G}\cap\bigcap\limits_\twoline{P\in \Min(L^f_{S}(u))}{\height P=n-q} P,\mbox{ otherwise.}$$
\end{Theorem}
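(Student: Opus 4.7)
The plan is to combine Bonanzinga's identity $I = (L^i(v)) \cap (L^f(u))$ from \cite{B} with the minimal primary decompositions established in Theorem \ref{prim dec initial} and Theorem \ref{prim dec final}, and then to prune the resulting intersection to its minimal primes. Since $P_B \subseteq P_C$ is equivalent to $B \subseteq C$, and since both decompositions being combined are already individually minimal, the only redundancies to detect are across the two lists: an initial prime may be absorbed by a final prime or vice versa. I will repeatedly use the duality $x_A \geq_{lex} x_B \Leftrightarrow x_{A^c} \leq_{lex} x_{B^c}$ for same-degree squarefree monomials to translate set-theoretic inclusions into the lex conditions appearing in the statement.

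For the primes $P_{A_t}$ from Theorem \ref{prim dec initial}, every prime of $\Min(L^f(u))$ has height at least $n-q$, so no strict absorption is possible when $|A_t| \leq n-q$, and these primes survive unconditionally. When $|A_t|=n-q+1$, the only candidate absorbers are the last-family primes $P_G$ of Theorem \ref{prim dec final} with $G = A_t \setminus \{i\}$ for some $i \in A_t$; the weakest condition is reached at $i=\min(A_t)=1$ (using the reduction $j_1 \geq 2$), and using $j_t=n-q+t$ in this boundary case a direct computation shows that non-absorption is equivalent to $u/x_1 \geq_{lex} v/x_{j_t}$.

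For the initial primes $P_{G^c}$ with $|G|=q-1$, absorption by a height $n-q$ final prime $P_{G^c \setminus \{j\}}$ is weakest at $j = \min(G^c)$, and when $1 \notin G$ the complement-swap yields the clean condition $u/x_1 \geq_{lex} x_G$ as non-absorption. The subtle point is the case $1 \in G$: one must also account for absorption by (or coincidence with) height $n-q+1$ primes from the second family of Theorem \ref{prim dec final}, and a short case analysis shows that $P_{G^c}$ is then always absorbed --- by a last-family prime when $x_{G^c \setminus \min(G^c)} <_{lex} x_{F^c \setminus \{1\}}$, and otherwise absorbed by (or coinciding with) a second-family final prime, the latter being available precisely when $x_2 \mid u$. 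This dichotomy produces the constraint $G \subset [n] \setminus \{1\}$ in Case 2 and splits off the new second-family block appearing only there.

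Finally I would check the reverse direction: the first family of Theorem \ref{prim dec final} contributes nothing new because every such $G$ contains some $A_t$ with $|A_t| \leq n-q$ or else coincides with a second-family initial prime $P_{G'^c}$; and the second family of Theorem \ref{prim dec final} is empty when $x_2 \nmid u$, since then $\min(F^c\setminus\{1\}) = 2$ while any $H \subset [n]\setminus\{1\}$ has $\min(H\setminus\min(H)) \geq 3$, violating $x_{H \setminus \min(H)} \geq_{lex} x_{F^c \setminus \{1\}}$. The main obstacle is the bookkeeping in this last step: verifying that after all the eliminations the two lists are minimal and exhaustive requires carefully tracking the boundary dichotomy $x_2 \mid u$ versus $x_2 \nmid u$, which is precisely what forces the theorem to split into two separate formulas.
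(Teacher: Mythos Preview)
Your approach is correct but genuinely different from the paper's. The paper does \emph{not} prune the union $\Min(L^i(v))\cup\Min(L^f(u))$; instead it works directly with the simplicial complex $\Delta$ of $I$ and classifies its facets by cardinality. Facets of size $>q$ are shown to coincide with the large facets of the complex $\Gamma_1$ of $(L^i(v))$, using the observation $\shad(I)=\shad(L^i(v))$ (since $\shad(L^f(u))=\Mon^s_{q+1}(S)$). Facets of size $q$ split according to whether $x_G>_{lex}u$ (facets of the complex $\Gamma_2$ of $(L^f(u))$) or $x_G<_{lex}v$ (remaining facets of $\Gamma_1$). Facets of size $q-1$ are exactly the $(q-1)$-facets of $\Gamma_1$ that additionally satisfy $x_{G\cup\{\min([n]\setminus G)\}}\leq_{lex}u$, and the paper then case-splits on whether $\min([n]\setminus G)=1$ and on whether $i_2=2$.

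Your route---intersect the two known decompositions and delete redundant primes---is equally valid for radical ideals and arguably exploits Theorems~\ref{prim dec initial} and~\ref{prim dec final} more directly. The trade-off is that the paper's facet argument is self-contained once one knows the shadow identity, while yours requires tracking several cross-absorptions. Your outline handles these correctly: in particular your verification that $1\in A_t$ forces $A_t\not\subseteq G$ for every height-$(n-q)$ and every second-family height-$(n-q+1)$ final prime is what guarantees those final primes are never killed by initial ones, and your lex--complement swap produces exactly the survival conditions $u/x_1\geq_{lex}v/x_{j_t}$ and $u/x_1\geq_{lex}x_G$ appearing in the statement. The one place your sketch is thin is the ``reverse direction'' for the first family of final primes: you should make explicit that when $G^c\cap A_t\neq\emptyset$ for all $t$, the condition $x_G\geq_{lex}x_{F^c}$ forces $u/x_1\geq_{lex}x_{G^c}$ (and, in the boundary case $A_t=G$, forces $u/x_1\geq_{lex}v/x_{j_t}$), so that $P_G$ genuinely reappears in one of the surviving blocks rather than being silently dropped.
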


\begin{proof}
To begin with, we describe the facets of the simplicial complex $\Delta$ associated with $I$ which have cardinality greater than $q$. 

Let $G$ be a facet of $\Delta$ with $|G|>q$. Then $x_G\notin I$ and for all $i\in[n]\setminus G$, $x_{G\cup\{i\}}\in \shad^e(I)$, for some $e>0$, where $\shad^e(I)$ is the $e-$th shadow of $I$. Since $I$ is a completely squarefree lexsegment ideal, $\shad(I)=\shad(L^i(v))\cap\shad(L^f(u))=\shad(L^i(v))\cap\Mon_{q+1}^s(S)=\shad(L^i(v))$. 

We also note that since $|G|>q$, then $x_G\in\shad^e(L^f(u))$, thus $x_G\in (L^f(u))$. Therefore $G$ is a facet of $\Delta$ if and only if $x_G\notin(L^i(v))$ and $x_{G\cup\{i\}}\in \shad^e(L^i(v))$, for some $e>0$, which is equivalent to the fact that $G$ is a facet of $\Gamma_1$, where $\Gamma_1$ is the simplicial complex associated with $(L^i(v))$. By Theorem \ref{prim dec initial}, it follows that $G=[n]\setminus A_t$, for some $1\leq t\leq q$, where $A_t=[j_t]\setminus\{j_1,\ldots,j_{t-1}\}$ and $|A_t|<n-q$.

In the second step of the proof, we describe the facets of $\Delta$ of cardinality $q$. Let $G$ be a facet of $\Delta$ with $|G|=q$. Then $x_G\notin I$ and $x_{G\cup\{i\}}\in I$, for all $i\in[n]\setminus G$, since, as we noticed above, $\shad(L^f(u))=\Mon_{q+1}^s(S)$. Therefore $G$ is a facet of $\Delta$ with $|G|=q$ if and only if $x_G\notin G(I)=L(u,v)$ and $x_{G\cup \max([n]\setminus G)}\geq_{lex}vx_{\max([n]\setminus\supp(v))}$. In other words, $G$ is a facet of $\Delta$ if and only if $G$ is a facet of $\Gamma_2$, where $\Gamma_2$ is the simplicial complex associated with $(L^f(u))$, which is equivalent to $P_{G^c}\in\Min(L^f(u))$, or $x_G<_{lex}v$ and $x_{G\cup \max([n]\setminus G)}\geq_{lex}vx_{\max([n]\setminus\supp(v))}$. The later condition says that $G$ is a facet of $\Gamma_1$, which means that $G=[n]\setminus A_t$, for some $t$ such that $|A_t|=n-q$. 

Finally, let us describe the facets $G$ of $\Delta$ with $|G|=q-1$. We have that $G$ is a facet of $\Delta$ if and only if $x_{G\cup\{i\}}\in I$, for all $i\in[n]\setminus G$, hence if and only if $G$ is a facet of $\Gamma_1$ and $x_{G\cup \min([n]\setminus G)}\leq_{lex}u$. 

We have to consider the following cases:

\textit{Case 1:} There is an integer $1\leq t\leq q$ such that $G=[n]\setminus A_t$, that is $G=\{j_1,\ldots,j_{t-1}\}\cup\{j_t+1,\ldots,n\}$. Then we obtain $\min([n]\setminus G)=1$ and the condition $x_{G\cup \min([n]\setminus G)}\leq_{lex}u$ is equivalent to $x_G\leq_{lex}u/x_1$.

Since $|G|=q-1$, it follows that $j_t-(t-1)=n-q+1$, that is $j_t=n-q+t$. In this case, $v=x_{j_1}\cdots x_{j_{t-1}}x_{j_t}x_{j_t+1}\cdots x_n$, thus $x_G=v/x_{j_t}$. Therefore, $G=[n]\setminus A_t$ is a facet of $\Delta$ of cardinality $q-1$ if and only if $u/x_1\geq_{lex} v/x_{j_t}$. 

\textit{Case 2:} Let $G$ be a facet of $\Gamma_1$ with $|G|=q-1$ and $G\cap A_t\neq\emptyset$ for all $1\leq t\leq q$, such that $x_{G\cup \min([n]\setminus G)}\leq_{lex}u$. If $1\notin G$, then $\min([n]\setminus G)=1$, hence $x_G\leq_{lex}u/x_1$. 

Let now consider $1\in G$ and assume that $i_2\geq 3$. In this case, the condition $x_{G\cup \min([n]\setminus G)}\leq_{lex}u$ is equivalent to $x_{G\setminus\{1\}}x_{\min([n]\setminus G)}\leq_{lex}u/x_1=x_{i_2}\cdots x_{i_q}$, which is imposible since $x_2\mid x_{G\setminus\{1\}}x_{\min([n]\setminus G)}$. Therefore, in this case, the proof is completed.

What is left is to consider $1\in G$ and $i_2=2$. Then $G$ must satisfy the following conditions: $G\cap A_t\neq \emptyset$, for all $1\leq t\leq q$  and $x_{([n]\setminus G)\setminus \{\min([n]\setminus G)\}}\geq_{lex}x_{F^c\setminus \{1\}}$, the later one being equivalent to $x_{G\setminus\{1\}}x_{\min([n]\setminus G)}\leq_{lex}u/x_1$.  
\end{proof}

Next, we give some immediate consequences.

\begin{Corollary}
If $I$ is a completely squarefree lexsegment ideal determined by the monomials $u=x_1x_{i_2}\cdots x_{i_q}$, $2\leq i_2<\ldots<i_q\leq n$ and $v=x_{j_1}\cdots x_{j_q}$, $2\leq j_1<\ldots<j_q\leq n$, then $\dim(S/I)=n-j_1$.
\end{Corollary}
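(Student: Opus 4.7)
The plan is to reduce this claim to the two dimension formulas already established for the building blocks of a completely squarefree lexsegment ideal. My first step would be to invoke the decomposition $I = (L^i(v)) \cap (L^f(u))$ from \cite{B} together with the elementary identity $V(J_1 \cap J_2) = V(J_1) \cup V(J_2)$, which yields
$$\dim(S/I) = \max\bigl\{\dim(S/(L^i(v))),\ \dim(S/(L^f(u)))\bigr\}.$$
By Corollary \ref{dim} the first dimension is $n - j_1$, and by Corollary \ref{dim final} the second is $q$. Hence the problem reduces to comparing $n - j_1$ with $q$.

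To finish I would argue that $n - j_1 \geq q$. Under the implicit assumption that $L(u,v)$ is a genuine two-sided lexsegment---otherwise $v = x_{n-q+1}\cdots x_n$ is the minimum squarefree monomial of degree $q$ and $I$ collapses to the final lexsegment ideal $(L^f(u))$ already handled by Corollary \ref{dim final}---we must have $v >_{lex} x_{n-q+1}\cdots x_n$, forcing $j_1 \leq n - q$ and hence $n - j_1 \geq q$. Consequently the maximum equals $n - j_1$, as claimed.

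An alternative route, if one prefers to stay within the primary-decomposition framework, is to read off the smallest height of a minimal prime directly from Theorem \ref{prim dec completely}. The prime $P_{A_1}$ has height $|A_1| = j_1$, and a short pass through the remaining families shows that every other prime in the decomposition has height at least $\min\{n-q,\ n-q+1\} = n-q$, which under the non-degenerate hypothesis is at least $j_1$. The main subtlety in either route is simply the bookkeeping around the boundary case $j_1 = n-q+1$, where the lexsegment degenerates into a final one and the formula must be read through the preceding corollary rather than computed afresh.
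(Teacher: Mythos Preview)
Your first approach is correct and is actually cleaner than what the paper does. The paper states this corollary immediately after Theorem~\ref{prim dec completely} with no proof, so its intended argument is your second route: read the height of $I$ off the explicit list of minimal primes. Your intersection argument bypasses the full primary decomposition entirely, reducing everything to the two already-established dimension formulas together with the inequality $j_1\leq n-q$; this is a genuine simplification.

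One correction to your alternative route: it is not true that every minimal prime other than $P_{A_1}$ has height at least $n-q$. The primes $P_{A_t}$ for $t>1$ have height $|A_t|=j_t-(t-1)$, which can lie strictly between $j_1$ and $n-q$. What you actually need---and what is true---is that each $|A_t|\geq j_1$ (since $j_t\geq j_1+(t-1)$), while the remaining families all have height $n-q$ or $n-q+1$, both of which are at least $j_1$ in the non-degenerate situation. So the minimum height is indeed $j_1$, but for a slightly different reason than you stated. Your handling of the boundary case $j_1=n-q+1$ is appropriate: there $I$ collapses to a final lexsegment and the formula should be read through Corollary~\ref{dim final} instead.
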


\begin{Corollary}
Let $I\subset S$ be a completely squarefree lexsegment ideal determined by the monomials $u=x_1x_{i_2}$, $u\neq x_1x_2$, and $v=x_{j_1}x_{j_2}$, $v\neq x_{n-1}x_{n}$, and $G$ be the graph with the edge ideal $I$. Then 
$$I=P_{A_1}\cap P_{A_2}\cap\left(\bigcap\limits_{i_2\leq s<j_1}P_{[n]\setminus\{s\}}\right)\cap\left(\bigcap\limits_{s<i_2}P_{[n]\setminus\{1,s\}}\right),\mbox{ if }j_2\leq n-1,$$
and 
$$I=P_{A_1}\cap\left(\bigcap\limits_{i_2\leq s\leq j_1}P_{[n]\setminus\{s\}}\right)\cap\left(\bigcap\limits_{s<i_2}P_{[n]\setminus\{1,s\}}\right),\mbox{ if }j_2=n,$$
where $A_1=[j_1]$ and $A_2=[j_2]\setminus\{j_1\}$.

\end{Corollary}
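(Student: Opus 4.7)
The plan is to specialize Theorem~\ref{prim dec completely} to $q=2$. Since $u=x_1 x_{i_2}$ with $u\neq x_1 x_2$, we have $i_2\geq 3$, so $x_2\nmid u$ and the first of the two formulas in Theorem~\ref{prim dec completely} applies. Thus the proof reduces to computing each of the four intersections appearing there in the edge-ideal setting, then splitting cases on whether $j_2\leq n-1$ or $j_2=n$.

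First I would handle the intersections indexed by $A_1=[j_1]$ and $A_2=[j_2]\setminus\{j_1\}$, of cardinalities $j_1$ and $j_2-1$. Since $v\neq x_{n-1}x_n$ forces $j_1\leq n-2$, the prime $P_{A_1}$ always appears in the first intersection (where $|A_t|\leq n-2$). The prime $P_{A_2}$ falls into the first intersection when $j_2\leq n-1$, and into the second (where $|A_t|=n-1$) when $j_2=n$, subject to the lex check $u/x_1=x_{i_2}\geq_{lex} x_{j_1}=v/x_{j_2}$, equivalently $i_2\leq j_1$.

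For the third intersection, ranging over singletons $G=\{s\}$, the condition $G\cap A_t\neq\emptyset$ for all $t$ forces $s\in A_1\cap A_2=[j_1-1]$ and $u/x_1\geq_{lex}x_G$ gives $s\geq i_2$; this produces $\bigcap_{i_2\leq s<j_1}P_{[n]\setminus\{s\}}$. For the fourth intersection, I would apply Theorem~\ref{prim dec final} to isolate the minimal primes of $(L^f(u))$ of height $n-2$: they are exactly the primes from the last family there, indexed by $G\subset[n]$ with $|G|=n-2$ and $x_{F^c\setminus\{1\}}>_{lex}x_G$, where $F=\{i_2\}$. A direct lex comparison shows that this inequality forces $G^c=\{1,s\}$ with $s<i_2$, since otherwise $x_G$ contains $x_1$ while $x_{F^c\setminus\{1\}}$ does not and the inequality reverses. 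This yields $\bigcap_{s<i_2}P_{[n]\setminus\{1,s\}}$.

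Finally, assembling the four contributions: when $j_2\leq n-1$, the second intersection is empty and the first supplies both $P_{A_1}$ and $P_{A_2}$, giving the first displayed formula. When $j_2=n$, the second intersection contributes $P_{A_2}=P_{[n]\setminus\{j_1\}}$, which merges with the primes from the third intersection to extend the range from $i_2\leq s<j_1$ to $i_2\leq s\leq j_1$, giving the second displayed formula. The only delicate step is the lex comparison in the fourth intersection; the rest is clean bookkeeping, the key point to track being precisely the merger of the $|A_2|=n-1$ prime into the family $\{P_{[n]\setminus\{s\}}\}$ when $j_2=n$.
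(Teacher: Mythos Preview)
Your proposal is correct and follows the same approach as the paper, which simply records this corollary as the $q=2$ specialization of Theorem~\ref{prim dec completely} without giving a separate proof. Your computation of each of the four intersections is accurate, and the identification of the height-$(n-2)$ primes of $(L^f(u))$ via Theorem~\ref{prim dec final} is exactly what is needed; the only point worth noting is that the lex condition $i_2\leq j_1$ you flag need not be verified separately, since when it fails both the second intersection in Theorem~\ref{prim dec completely} and the range $i_2\leq s\leq j_1$ in the stated formula are empty, so the two agree regardless.
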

From the above formula one derives the minimal vertex covers of $G$.


\begin{Corollary}
Let $I\subset S$ be a completely squarefree lexsegment ideal determined by the monomials $u=x_1x_{i_2}\cdots x_{i_q}=x_1x_F$, $2\leq i_2<\ldots<i_q\leq n$ and $v=x_{j_1}\cdots x_{j_q}$, $2\leq j_1<\ldots<j_q\leq n$, and $\Delta$ be the simplicial complex with the Stanley-Reisner ideal $I$. Let $s$ be the unique integer such that $j_i=j_1+(i-1)$, for all $1\leq i< s$ and $j_s\geq j_1+s$, and $t$ be the cardinality of the set $\{x_G: x_{F^c\setminus\{1\}}>_{lex}x_G, |G|=n-q\}$. Then the multiplicity of $k[\Delta]$ is 
	\[e(k[\Delta])=\left\{\begin{array}{cc}
			s-1&,\mbox{ if }j_1<n-q \\
			s+t-1 &,\mbox{ if }j_1=n-q.   \\
	\end{array}\right.\]
\end{Corollary}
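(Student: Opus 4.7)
The plan is to combine Lemma \ref{multiplicity} with the dimension formula $\dim(S/I) = n - j_1$ from the preceding corollary to rewrite $e(k[\Delta]) = f_{n - j_1 - 1}$, i.e.\ the number of facets of $\Delta$ of cardinality $n - j_1$, and then to read off those facets directly from the primary decomposition in Theorem \ref{prim dec completely}.

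First I would sort the families appearing in Theorem \ref{prim dec completely} according to the cardinality of the facets they correspond to. The families indexed by $|A_t| = n - q + 1$ and by the $(q-1)$-subsets $G$ (together with the auxiliary family appearing in the $x_2 \mid u$ case) all produce facets of cardinality $q - 1$; those $P \in \Min(L^f_S(u))$ with $\height P = n - q$ produce facets of cardinality exactly $q$; and the first family, indexed by $|A_t| \leq n - q$, produces facets of cardinality at least $q$. This classification explains the split in the statement: when $j_1 < n - q$ the target cardinality $n - j_1$ strictly exceeds $q$, so only the first family can contribute, whereas when $j_1 = n - q$ the target equals $q$ and both the first family (restricted to $|A_t| = n - q$) and the $\Min(L^f_S(u))$-family are in play.

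The contribution from the first family is then computed via the identity $\height(P_{A_t}) = |A_t| = j_t - (t - 1)$, which equals $j_1$ exactly when $j_t = j_1 + (t - 1)$. By the very definition of $s$ this holds precisely for $t = 1, \ldots, s - 1$, so in both cases the first family yields $s - 1$ facets. In Case 2 the $\Min(L^f_S(u))$-family contributes $t$ further facets by the definition of $t$, which matches the multiplicity formula for a final squarefree lexsegment ideal proved earlier in this section.

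The one genuine verification I expect to need is disjointness of the two contributions in Case 2. A facet $[n] \setminus A_t$ from the first family never contains $1$, because $j_1 \geq 2$ forces $1 \in [j_t] \setminus \{j_1, \ldots, j_{t-1}\} = A_t$. A facet $G$ of the simplicial complex associated with $(L^f(u))$ of size $q$, on the other hand, is characterized by $x_G >_{lex} u = x_1 x_F$, which forces $1 \in G$. Hence the two families produce disjoint sets of facets, and summing the counts yields $e(k[\Delta]) = s + t - 1$, as claimed.
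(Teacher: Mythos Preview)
Your argument is correct and follows essentially the same route as the paper's proof: reduce to counting facets of size $n-j_1$ via Lemma~\ref{multiplicity} and the dimension formula, then read off the relevant minimal primes from Theorem~\ref{prim dec completely} using $|A_t|=j_t-(t-1)$. Your explicit disjointness check in Case~2 is a nice addition that the paper leaves implicit (it is already guaranteed by the minimality of the primary decomposition in Theorem~\ref{prim dec completely}).
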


\begin{proof}
The multiplicity of $k[\Delta]$ is $e(k[\Delta])=f_{n-j_1-1}$. To determine the number of facets of $\Delta$ of cardinality $n-j_1$ is equivalent to determine the number of minimal prime ideals which contains $I$ of $\height(p)=j_1$. Let $s$ be the unique integer such that $j_i=j_1+(i-1)$, for all $1\leq i< s$ and $j_s\geq j_1+s$. Then $|A_i|=j_i-(i-1)=j_1$, for all $1\leq i\leq s-1$ and $|A_i|=j_i-(i-1)>j_1$, for $i\geq s$. By Theorem \ref{prim dec completely}, if $j_1<n-q$, then $e(k[\Delta])=s-1$, and if $j_1=n-q$, then $e(k[\Delta])=s-1+t$, where $t$ is the cardinality of the set $\{x_G: x_{F^c\setminus\{1\}}>_{lex}x_G, |G|=n-q\}$.
\end{proof}

\section{Completely squarefree lexsegment ideals which are sequentially Cohen--Macaulay}

This section is devoted to determining the completely squarefree lexsegment ideals which are sequentially Cohen--Macaulay. 

For an ideal $I$, we will denote by $I^\vee$ its Alexander dual. By \cite [Theorem 8.2.20]{HeHi}, to prove that the ideal $I$ is sequentially Cohen--Macaulay is equivalent to prove that its Alexander dual is componentwise linear. If $I\subset S$ is a graded ideal, then we denote by $I_{\langle j\rangle}$ the ideal generated by all homogeneous polynomials of degree $j$ belonging to $I$. A graded ideal $I\subset S$ is componentwise linear if $I_{\langle j\rangle}$ has a linear resolution for all $j$.

As we did for the primary decomposition, we will start the study of the sequentially Cohen--Macaulay property with the case of initial squarefree lexsegment ideals. Moreover, we will prove a more general result, which provides a new class of componentwise squarefree ideals. 

In \cite{MH}, the so-called canonical critical ideals have been studied. We recall the definition. A homogeneous ideal $I\subset S$ is called \textit{canonical critical} if it is of the form $I=(f_1x_1,f_1f_2x_2,\ldots,f_1\cdots f_{s-1}x_{s-1},f_1f_2\cdots f_s)$, for some homogeneous polynomials $f_1,\ldots,f_s$, with $f_i\in k[x_i,\ldots ,x_n]$, for each $1\leq i\leq s$ and with $\deg(f_s)>0$, where $1\leq s\leq n$. 

We consider now the class of squarefree monomial (canonical) critical ideals. Let $S=k[x_1,\ldots,x_n]$ be the polynomial ring in $n$ variables over a field $k$ and $I$ be a squarefree monomial ideal in $S$. We denote by $G(I)$ the minimal system of monomial generators of $I$.

\begin{Definition}\rm
A squarefree monomial ideal $I\subset S$ is called \textit{critical} if it is obtained by the following recursive procedure:
\begin{enumerate}
	\item [(a)] The ideal $I$ is of the form $I=(x_i,m)$, for some squarefree monomial $m$ with $x_i\nmid m$,
	\item [(b)] There is a variable $x_j$, a squarefree monomial $m'$ and $J$ a critical ideal, with $x_j\nmid mm'$ and $\supp(m)\cap\supp(m')=\emptyset$, for all the monomials $m\in G(J)$, such that $I=(x_j)+m'J$.
\end{enumerate}
\end{Definition}

\begin{Definition}\rm
A squarefree monomial ideal $I\subset S$ is called \textit{canonical critical} if there exists $w$ a monomial in $S$ such that $I=wJ$, where $J$ is a critical squarefree monomial ideal.
\end{Definition}

\begin{Proposition} 
Let $I\subset S$ be a critical squarefree monomial ideal. Then $I$ has linear quotients.
\end{Proposition}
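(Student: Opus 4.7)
My plan is to induct on the recursive construction of critical ideals given in the definition, exhibiting in each case a natural ordering of the minimal generators for which the successive colon ideals are generated by variables.

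For the base case, $I=(x_i,m)$ with $x_i\nmid m$. I order the generators as $x_i$ followed by $m$; since $\gcd(x_i,m)=1$ we have $(x_i):m=(x_i)$, which is generated by a single variable, so $I$ has linear quotients.

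For the inductive step, assume $J$ is critical and, by induction, admits an ordering $n_1,\ldots,n_r$ of $G(J)$ realizing linear quotients; let $I=(x_j)+m'J$ as in clause (b) of the definition. I first unpack the hypotheses: the condition $\supp(n_k)\cap\supp(m')=\emptyset$ guarantees that each $m'n_k$ is squarefree with $\gcd(m'n_k,m'n_i)=m'\gcd(n_k,n_i)$, while $x_j\nmid m' n_k$ ensures that $x_j$ does not appear in any $m'n_k$. These two facts together show that the minimal generating set is exactly $G(I)=\{x_j\}\cup\{m'n_1,\ldots,m'n_r\}$, with no redundancy. I then order $G(I)$ as $x_j,\,m'n_1,\ldots,m'n_r$. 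For $i\geq 1$, a direct computation gives
\[(x_j,m'n_1,\ldots,m'n_{i-1}):(m'n_i)=(x_j):(m'n_i)\;+\;\sum_{k<i}(m'n_k):(m'n_i)=(x_j)\;+\;\sum_{k<i}(n_k):(n_i),\]
where the first simplification uses $\gcd(x_j,m'n_i)=1$ and the second uses the factorization of gcds noted above. By the inductive hypothesis the sum $\sum_{k<i}(n_k):(n_i)$ is generated by variables, so adjoining $x_j$ still yields an ideal generated by variables.

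The only delicate point is the identification of $G(I)$ and the clean splitting of the colon ideal into a $(x_j)$-part and an $m'J$-part; both reduce immediately to the disjointness and non-divisibility conditions built into the definition of a critical ideal, so once these are spelled out the induction goes through without obstruction.
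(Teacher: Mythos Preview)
Your argument is correct and follows essentially the same route as the paper: induction on the recursive construction, ordering the generators with the distinguished variable first, and splitting the colon ideal $(x_j,m'n_1,\ldots,m'n_{i-1}):(m'n_i)$ into $(x_j)+(n_1,\ldots,n_{i-1}):(n_i)$ via the disjoint-support hypotheses. If anything, you are slightly more careful than the paper in spelling out why $G(I)$ has no redundancy and why the gcd factorizations go through.
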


\begin{proof}
We use induction on $|G(I)|$.

If $|G(I)|=2$, then $I=(x_i,m)$, for some squarefree monomial $m$ and $x_i\nmid m$. Since $(x_i):m=(x_i)$, it follows that $I$ has linear quotients. 
 
Assume that the assertion holds for all critical ideals $J$, with $|G(J)|=s$, with $s\geq 2$. By definition, there is a variable $x_j$, a squarefree monomial $m'$ and $J$ a critical ideal, with $x_j\nmid mm'$ and $\supp(m)\cap\supp(m')=\emptyset$, for all the monomials $m\in G(J)$, such that $I=(x_j)+m'J$ and $|G(I)|=s+1$. 

Let us assume that $G(J)=\{x_i,m_2,\ldots, m_s\}$ such that $J$ has linear quotients with respect to $x_i,m_2,\ldots, m_s$. We order the minimal system of generators of $I$, $G(I)=\{x_j,m'x_i,m'm_2,\ldots ,m'm_s\}$. For every $2\leq t\leq s$, we have that the ideal quotient 
$$(x_j,m'x_i,\ldots ,m'm_{t-1}):(m'm_t)=(x_j):(m'm_t)+(m'x_i,\ldots ,m'm_{t-1}):(m'm_t)=$$
$$=(x_j)+(x_i,\ldots ,m_{t-1}):(m_t)$$
is generated by variables, by the induction hypothesis. Hence $I$ has linear quotients.
\end{proof}

\begin{Corollary}
Any canonical critical squarefree monomial ideal has linear quotients.
\end{Corollary}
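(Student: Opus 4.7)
The plan is to reduce the statement to the preceding Proposition by showing that the linear quotients property is preserved under multiplication by a monomial, provided the product stays squarefree.

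First I would write $I = wJ$ with $w$ a monomial and $J$ a critical squarefree monomial ideal, as in the definition. Since $I$ itself is squarefree, multiplying any generator of $J$ by $w$ must yield a squarefree monomial; hence $\supp(w)\cap\supp(m)=\emptyset$ for every $m\in G(J)$, and consequently $G(I)=\{wm : m\in G(J)\}$, with no cancellations or absorptions among these generators. By the previous Proposition, $J$ has linear quotients, so we can fix an ordering $G(J)=\{m_1,\dots,m_s\}$ of its generators for which each colon $(m_1,\dots,m_{t-1}):(m_t)$ is generated by variables.

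Next I would order $G(I)$ compatibly as $wm_1,\dots,wm_s$ and establish the key identity
\[
(wm_1,\dots,wm_{t-1}) : (wm_t) \;=\; (m_1,\dots,m_{t-1}) : (m_t)
\]
for every $2\le t\le s$. The inclusion ``$\supseteq$'' is immediate, since $f\cdot m_t\in(m_1,\dots,m_{t-1})$ implies $f\cdot wm_t\in(wm_1,\dots,wm_{t-1})$. For ``$\subseteq$'', any $f$ with $f\cdot wm_t=\sum_i g_i\cdot wm_i$ satisfies $f\cdot m_t=\sum_i g_i\cdot m_i$ after cancelling $w$ (we are working in the polynomial ring, which is a domain), so $f\in(m_1,\dots,m_{t-1}):(m_t)$. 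Thus the colon ideals for $I$ coincide with those for $J$, and in particular are generated by variables. This shows $I$ has linear quotients with respect to the ordering $wm_1,\dots,wm_s$.

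I do not expect a serious obstacle: the previous Proposition supplies all the content, and the only point to verify is the elementary fact that pulling out a common monomial factor leaves each colon ideal unchanged. The proof is therefore essentially a one-line reduction to the previous Proposition together with this colon computation.
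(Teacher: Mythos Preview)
Your proposal is correct and is exactly the reduction the paper intends: the paper states this corollary without proof, treating it as immediate from the preceding Proposition, and your argument simply makes explicit the underlying fact that $(wm_1,\dots,wm_{t-1}):(wm_t)=(m_1,\dots,m_{t-1}):(m_t)$, which is the content of that immediacy.
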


The next result gives us a new class of componentwise linear ideals.

\begin{Corollary}\label{critical comp lin}
If $I$ is a canonical critical ideal, then $I$ is componentwise linear. 
\end{Corollary}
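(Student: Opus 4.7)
The strategy is to strengthen the preceding corollary by observing that the generator order used to establish linear quotients automatically has non-decreasing degrees, and then apply a standard result to conclude componentwise linearity.

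First, I would unroll the recursive definition of a critical ideal. At each recursive step $J = (x_j) + m' J^{old}$, one prepends the variable $x_j$ and multiplies the previously-constructed generators of $J^{old}$ by the squarefree monomial $m'$. Starting from the base case $(x_i, m)$, ordered as $x_i, m$, a critical ideal therefore admits the ordered generating set
\[
x_{j_k},\ m_k' x_{j_{k-1}},\ m_k' m_{k-1}' x_{j_{k-2}},\ \ldots,\ m_k' \cdots m_1' m_0,
\]
and a canonical critical ideal $I = wJ$ admits the order obtained by multiplying each of these by $w$. The corresponding degree sequence
\[
\deg w + 1,\ \deg w + 1 + \deg m_k',\ \deg w + 1 + \deg m_k' + \deg m_{k-1}',\ \ldots
\]
is non-decreasing, since each $m_i'$ is a squarefree monomial of nonnegative degree and each generator originating from the base case has degree at least $1$. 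By the preceding corollary, $I$ has linear quotients with respect to exactly this order.

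Next I would invoke the standard fact that a monomial ideal with linear quotients whose minimal generators are ordered by non-decreasing degree is componentwise linear (see, e.g., \cite{HeHi}, Chapter~8). Combined with the first step, this completes the proof.

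The main point to verify carefully is that the generator order used in the proof of linear quotients really does coincide with the one produced by unrolling the recursive definition above; this is built into the inductive structure of the preceding proposition. The support-disjointness hypothesis in the definition of a critical ideal plays a role both in ensuring the colon ideals are generated by variables and, indirectly, in guaranteeing that degrees do not decrease across the construction (one must also handle the degenerate possibility $m_i' = 1$, but in that case the corresponding step simply prepends a variable to an already non-decreasing sequence). I do not anticipate a deeper obstacle.
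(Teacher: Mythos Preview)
Your proposal is correct. Both your argument and the paper's rely on the preceding corollary that canonical critical ideals have linear quotients, and then invoke a black-box result to deduce componentwise linearity; the difference lies in which black box is used. The paper simply cites \cite{JZ}, where Jahan and Zheng prove that \emph{any} monomial ideal with linear quotients is componentwise linear, with no hypothesis on the degree order of the generators. You instead appeal to the more classical statement (as in \cite{HeHi}) that linear quotients with respect to a non-decreasing degree order implies componentwise linearity, and you supply the extra verification that the order produced by the recursive construction is indeed degree non-decreasing. Your route is slightly more elementary and self-contained, at the cost of the additional check; the paper's route is a one-line citation but leans on a stronger theorem. Either way the argument goes through.
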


\begin{proof}
This is a consequence of \cite{JZ}.
\end{proof}

We will use the properties of these ideals for the case of initial squarefree lexsegment ideals.

\begin{Proposition}
Let $I\subset k[x_1,\ldots,x_n]$ be the initial squarefree lexsegment ideal, generated in degree $q$, determined by the monomial $v=x_{j_1}\cdots x_{j_q}$, with $2\leq j_1<\cdots<j_q\leq n$. Then $I$ is sequentially Cohen--Macaulay.
\end{Proposition}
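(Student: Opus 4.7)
The plan is to pass to Alexander duals. By \cite[Theorem 8.2.20]{HeHi}, $S/I$ is sequentially Cohen--Macaulay if and only if $I^\vee$ is componentwise linear, so it suffices to prove the latter.

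First, I would observe that $I=(L^i(v))$ is squarefree strongly stable: if $x_G\in I$, $j\in G$, $i<j$, and $i\notin G$, then $x_{(G\setminus\{j\})\cup\{i\}}\geq_{lex}x_G\geq_{lex}v$, so the swap remains in $I$. Equivalently, the simplicial complex $\Delta$ with $I=I_\Delta$ is shifted. Then I would verify the combinatorial fact that a shifted complex has a shifted Alexander dual: given $F\in\Delta^\vee$ and $i<j$ with $j\in F$, $i\notin F$, the set $(F\setminus\{j\})\cup\{i\}$ must lie in $\Delta^\vee$, for otherwise its complement would be a face of $\Delta$ and the shiftedness of $\Delta$ applied to that face would recover $F^c\in\Delta$, contradicting $F\in\Delta^\vee$. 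Therefore $I^\vee=I_{\Delta^\vee}$ is again squarefree strongly stable, which implies (by the same linear quotient argument used to prove Corollary \ref{critical comp lin} via \cite{JZ}) that $I^\vee$ is componentwise linear. Invoking the Herzog--Hibi correspondence once more then concludes the proof.

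The main potential obstacle is to remain within the combinatorial framework set up in the section and avoid invoking the ``shifted dual of a shifted complex'' lemma as a black box. A self-contained alternative would be to exploit Theorem \ref{prim dec initial} directly: list the generators of $I^\vee$, namely the $x_{A_t}$ for $t=1,\dots,q$ together with the $x_{F^c}$ for the $(q-1)$-subsets $F$ meeting every $A_t$, order them in decreasing lex order, and check by a case analysis that each successive colon $(g_1,\dots,g_{k-1}):g_k$ is generated by variables. That would produce linear quotients for $I^\vee$, closely paralleling the argument used for critical ideals earlier in the section.
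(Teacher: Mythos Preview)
Your overall strategy is sound and genuinely different from the paper's proof. The paper does not invoke the strongly stable property at all; instead it reads off from Theorem~\ref{prim dec initial} that $I^\vee=J+K$ with $J=(x_{A_1},\dots,x_{A_q})$ and $K$ generated in degree $n-q+1$, observes that $J$ is a canonical critical squarefree ideal (so $I^\vee_{\langle j\rangle}=J_{\langle j\rangle}$ has a linear resolution for $j\le n-q$ by Corollary~\ref{critical comp lin}), and then checks directly that $I^\vee_{\langle n-q+1\rangle}=I_{n,n-q+1}$. Your route via ``$I$ strongly stable $\Rightarrow$ $I^\vee$ strongly stable $\Rightarrow$ componentwise linear'' is shorter and more conceptual, and applies to any squarefree strongly stable ideal; the paper's argument, by contrast, exercises the primary decomposition and the critical-ideal machinery that are reused later (Lemma~\ref{dual lower deg}, Proposition~\ref{dual deg n-q}) for the general completely lexsegment case.

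There is, however, a genuine direction error in your verification of the ``shifted dual'' step. With your convention (faces of $\Delta$ closed under replacing an element by a \emph{larger} one, which is what $I_\Delta$ squarefree strongly stable gives), the Alexander dual $\Delta^\vee$ is shifted in the \emph{same} direction, not the one you state. Concretely: if $F\in\Delta^\vee$, $i\in F$, $j>i$, $j\notin F$, then $(F\setminus\{i\})\cup\{j\}\in\Delta^\vee$. Your version (replacing $j\in F$ by a smaller $i$) is false in general --- already for $n=4$, $v=x_2x_3$ one has $\{1,4\}\in\Delta^\vee$ but $\{1,2\}\notin\Delta^\vee$. Your contradiction argument breaks for the same reason: from $(F^c\setminus\{i\})\cup\{j\}\in\Delta$ you would need to pass to $F^c$ by replacing $j$ with the \emph{smaller} $i$, which the shiftedness of $\Delta$ does not provide. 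Once the inequality is flipped the argument goes through and the conclusion stands.
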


\begin{proof}
We need to prove that the Alexander dual $I^\vee$ is componentwise linear. By the primary decomposition, Theorem \ref{prim dec initial}, one has $I^\vee=J+K$, where $J=(x_{A_t}:1\leq t\leq q)$ and $K=(x_{G^c}:|G|=q-1,G\cap A_t\neq\emptyset,1\leq t\leq q)$. Since $J$ is generated in degree at most $n-q$ and $K$ is generated in degree $n-q+1$, we obtain that $I^\vee_{\langle j \rangle}=J_{\langle j \rangle}$, for all $j<n-q+1$.

For all $j<n-q+1$, one has that $I^\vee_{\langle j \rangle}=J_{\langle j \rangle}$ and it has a linear resolution, since $J$ is a canonical critical squarefree monomial ideal. Indeed, we have 
$$J=x_1\cdots x_{j_1-1}(x_{j_1}+x_{j_1+1}\cdots x_{j_2-1}(x_{j_2}+x_{j_2+1}\cdots x_{j_3-1}(\ldots))).$$

We prove that $I^\vee_{\langle n-q+1 \rangle}=I_{n,n-q+1}$ is the ideal generated by all the squarefree monomials of degree $n-q+1$ in $S$. This will end our proof since $I_{n,n-q+1}$ has a linear resolution.

Let $m=x_{F^c}\in G(I_{n,n-q+1})$, with $|F|=q-1$, be a squarefree monomial. If $F\cap A_t\neq\emptyset$, for all $1\leq t\leq q$, then $m\in G(I^\vee_{\langle n-q+1 \rangle})$. Assume that there is an integer $1\leq t\leq q$ such that $F\cap A_t=\emptyset$. It results that $F^c\supset A_t$, hence $x_{A_t}\mid m$, thus $m\in G(I^\vee_{\langle n-q+1 \rangle})$. 

The other inclusion is trivial, namely $I^\vee_{\langle n-q+1 \rangle}\subseteq I_{n,n-q+1}$.
\end{proof}

The final squarefree lexsegment ideals are sequentially Cohen--Macaulay, as it follows from the next result.

\begin{Proposition}
Let $I\subset S$ be the final squarefree lexsegment ideal generated in degree $q$, determined by the monomial $u=x_1x_{i_2}\cdots x_{i_q}$, $2\leq i_2<\ldots<i_q\leq n$. Then $I$ is sequentially Cohen--Macaulay.
\end{Proposition}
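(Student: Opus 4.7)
By \cite[Theorem 8.2.20]{HeHi}, $S/I$ is sequentially Cohen--Macaulay if and only if $I^\vee$ is componentwise linear. By Proposition \ref{deg-dual}, $I^\vee$ is minimally generated in degrees $n-q$ and $n-q+1$, so the problem reduces to showing that both $I^\vee_{\langle n-q\rangle}$ and $I^\vee_{\langle n-q+1\rangle}$ admit linear resolutions; componentwise linearity in higher degrees then follows because $\mm J$ has a linear resolution whenever $J$ does.

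The next step is to read off the minimal generators of $I^\vee$ from the primary decomposition in Theorem \ref{prim dec final}. Setting $m=x_{F^c\setminus\{1\}}$, the degree-$(n-q)$ generators (third family) are exactly $\{x_G:|G|=n-q,\ x_G<_{lex}m\}$. Since $m$ does not involve $x_1$, neither do any of these $x_G$, so $I^\vee_{\langle n-q\rangle}$ is a final squarefree lexsegment ideal that lives inside $k[x_2,\ldots,x_n]$. The degree-$(n-q+1)$ generators split as the first-family set $\{x_1 x_{G'}:G'\subset\{2,\ldots,n\},\ |G'|=n-q,\ x_{G'}\geq_{lex}m\}$ (an initial lexsegment in $n-1$ variables, multiplied by $x_1$) together with the second-family set $\{x_G:G\subset\{2,\ldots,n\},\ |G|=n-q+1,\ x_{G\setminus\min(G)}\geq_{lex}m\}$.

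To establish a linear resolution for each component, I plan to list the generators in lex-decreasing order and verify linear quotients directly, in the same spirit as the argument used for canonical critical ideals in the proof of Corollary \ref{critical comp lin}. For $I^\vee_{\langle n-q+1\rangle}$ the most delicate step is comparing a first-family generator (which begins with $x_1$) with a second-family generator (which does not contain $x_1$): the required single-variable colon generator is produced by swapping $x_1$ for $x_{\min(G)}$ in the second-family generator $x_G$, and the second-family hypothesis $x_{G\setminus\min(G)}\geq_{lex}m$ is exactly what makes that swap land back among earlier generators. For $I^\vee_{\langle n-q\rangle}$ the analogous verification happens entirely inside $k[x_2,\ldots,x_n]$.

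The main obstacle is the degree-$(n-q)$ component, because an arbitrary final squarefree lexsegment ideal need not admit a linear resolution. The argument succeeds here because the top element $m=x_{F^c\setminus\{1\}}$ is constrained by the form $u=x_1 x_{i_2}\cdots x_{i_q}$; an induction on $n$ (or equivalently on $|F^c\setminus\{1\}|$), with base case $m=x_{n-q+1}\cdots x_n$ where $I^\vee_{\langle n-q\rangle}=0$, together with the persistence theorem from \cite{B}, should close this gap by reducing each step to a smaller instance of the same verification.
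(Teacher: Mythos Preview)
Your overall strategy---reduce to componentwise linearity of $I^\vee$ and check the components in degrees $n-q$ and $n-q+1$---is exactly what the paper does. But you make both steps harder than they need to be, and one of them rests on a misconception.

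For the degree $n-q+1$ component, the paper does not verify linear quotients family by family. Instead it observes that $I^\vee_{\langle n-q+1\rangle}=I_{n,n-q+1}$, the ideal of \emph{all} squarefree monomials of degree $n-q+1$, which trivially has a linear resolution. The argument is a short case check: any $x_{G^c}$ of degree $n-q+1$ is either $\geq_{lex} x_{F^c}$ (first family), or $<_{lex} x_{F^c}$; in the latter case either $x_{G^c\setminus\min(G^c)}<_{lex} x_{F^c\setminus\{1\}}$ so it is a variable times a third-family generator, or $x_{G^c\setminus\min(G^c)}\geq_{lex} x_{F^c\setminus\{1\}}$ so it lies in the second family. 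Your swap-$x_1$-for-$x_{\min(G)}$ manoeuvre is essentially this observation in disguise, but recognising that the three families exhaust $\Mon^s_{n-q+1}(S)$ removes all the delicate bookkeeping.

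For the degree $n-q$ component, your stated ``main obstacle'' is not an obstacle at all: every final squarefree lexsegment ideal has a linear resolution. The quickest way to see this is the involution $x_i\mapsto x_{n+1-i}$, which is a ring automorphism reversing the lex order on $\Mon^s_d(S)$ and hence carries $(L^f(w))$ isomorphically to an initial squarefree lexsegment ideal, which is squarefree (strongly) stable and therefore has a linear resolution. (The same conclusion also follows from Lemma~\ref{lex ideal complementary} later in the paper, or from the characterisation in \cite{B,BoS}.) So no induction or appeal to persistence is needed; the paper simply notes that $I^\vee_{\langle n-q\rangle}$ is a final squarefree lexsegment ideal and stops there.
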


\begin{proof}
We may assume that $I\neq I_{n,q}$. We will prove that the Alexander dual of $I$, $I^\vee$, is componentwise linear. 

It is easy to see, by Theorem \ref{prim dec final}, that $I^\vee_{\langle n-q\rangle}$ has a linear resolution, since it is a final squarefree lexsegment ideal. 

We show that $I^\vee_{\langle n-q+1\rangle}$ is the ideal generated by all the squarefree monomials of degree $n-q+1$. Indeed, we have the inclusion $I^\vee_{\langle n-q+1\rangle}\subset I_{n,n-q+1}$. For the other one, let $m=x_{G^c}$ be a squarefree monomial of degree $n-q+1$. It is clear that, using the notation $u=x_1x_F$, if $m=x_{G^c}\geq_{lex}x_{F^c}$, then $m\in G(I^\vee_{\langle n-q+1\rangle})$. Otherwise, assume that $m=x_{G^c}<_{lex}x_{F^c}$. We have to analyze two cases:

\textit{Case 1:} If $x_{F^c\setminus\{1\}}>_{lex}x_{G^c\setminus\min(G^c)}$, then $x_{G^c\setminus\min(G^c)}\in G(I^\vee_{\langle n-q\rangle})$ and $m=x_{G^c}\in I^\vee_{\langle n-q+1\rangle}$.

\textit{Case 2:} If $x_{F^c\setminus\{1\}}\leq_{lex}x_{G^c\setminus\min(G^c)}$, then $m=x_{G^c}\in G(I^\vee_{\langle n-q+1\rangle})$, which ends the proof. 
\end{proof}

We now focus on arbitrary completely squarefree lexsegment ideals. In order to characterize the completely squarefree lexsegment ideals which are sequentially Cohen--Macaulay, we have to establish when $I^\vee_{\langle j\rangle}$ has a linear resolution, for every $j\leq n-q+1$. Analyzing the minimal primary decompositions obtained if $x_2\mid u$ or $x_2\nmid u$, one can see that
\begin{eqnarray}
I^\vee_{\langle j\rangle}=(x_{A_t}:|A_t|\leq n-q)_{\langle j\rangle}\mbox{ for all }j<n-q\mbox{ and } \eqname{$1$}\label{1}
\end{eqnarray}
\begin{eqnarray}
I^\vee_{\langle n-q\rangle}=(x_{A_t}:|A_t|\leq n-q)_{\langle n-q\rangle}+(x_G:x_{F^c\setminus\{1\}}>_{lex}x_G,|G|=n-q), \eqname{$2$}\label{2}
\end{eqnarray}
where $u=x_1x_F$ and $A_t=[j_t]\setminus\{j_1,\ldots,j_{t-1}\},\mbox{ for }1\leq t\leq q.$ Note that only for $I^\vee_{\langle n-q+1\rangle}$ we have to treat separate cases, given by the conditions $x_2\mid u$ or $x_2\nmid u$. 

\begin{Lemma}\label{dual lower deg}
For every $j<n-q$, the ideal $I^\vee_{\langle j\rangle}$ has a linear resolution.
\end{Lemma}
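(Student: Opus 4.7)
The plan is to reduce the lemma to an application of Corollary \ref{critical comp lin} together with the formula displayed as~(1) above the statement. By~(1), for every $j<n-q$ one has
$$I^\vee_{\langle j\rangle}=(x_{A_t}:|A_t|\leq n-q)_{\langle j\rangle},$$
so it is enough to prove that the degree-$j$ component of $J':=(x_{A_t}:|A_t|\leq n-q)$ admits a linear resolution.

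Next I would compare $J'$ with the full ideal $J:=(x_{A_t}:1\leq t\leq q)$ that already appeared in the initial-squarefree-lexsegment case. The generators of $J$ absent from $J'$ are exactly the $x_{A_t}$ with $|A_t|>n-q$, and since $j<n-q<|A_t|$ for each such $t$, none of them (nor any of their multiples) can lie in degree $j$; therefore $J'_{\langle j\rangle}=J_{\langle j\rangle}$. The task thus reduces to showing that $J$ is componentwise linear. For this I would invoke the nested factorization
$$J=x_1\cdots x_{j_1-1}\bigl(x_{j_1}+x_{j_1+1}\cdots x_{j_2-1}(x_{j_2}+x_{j_2+1}\cdots x_{j_3-1}(\cdots))\bigr)$$
recorded in the proof of the previous proposition, which exhibits $J$ as a canonical critical squarefree monomial ideal in the sense of Section~3. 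By Corollary \ref{critical comp lin}, $J$ is then componentwise linear, so $J_{\langle j\rangle}$ has a linear resolution for every $j$.

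Chaining these identifications, $I^\vee_{\langle j\rangle}=J'_{\langle j\rangle}=J_{\langle j\rangle}$ has a linear resolution whenever $j<n-q$, which is precisely the claim. The only step that is not essentially formal is the equality $J'_{\langle j\rangle}=J_{\langle j\rangle}$, but this amounts to a single degree comparison and I do not foresee any genuine obstacle; the real input is Corollary \ref{critical comp lin}, which has already been established.
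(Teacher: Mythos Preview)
Your argument is correct and follows the same route as the paper: both use equation~(1) and then invoke the canonical critical structure together with Corollary~\ref{critical comp lin}. Your detour through the identity $J'_{\langle j\rangle}=J_{\langle j\rangle}$ is harmless but not strictly needed, since $J'=(x_{A_1},\ldots,x_{A_s})$ is itself canonical critical by the very same nested factorization truncated at level~$s$; the paper's one-line proof simply asserts this.
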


\begin{proof}
The ideal $I^\vee_{\langle j\rangle}$ has a linear resolution, for every $j<n-q$, since it is a canonical critical squarefree monomial ideal. 
\end{proof}

In order to determine when
$$I^\vee_{\langle n-q\rangle}=(x_{A_t}:|A_t|\leq n-q)_{\langle n-q\rangle}+(x_G:x_{F^c\setminus\{1\}}>_{lex}x_G,|G|=n-q)$$
has a linear resolution, we need some preparatory results.

Firstly, let $1\leq s\leq q$ be the unique index with the property that $j_s\leq n-q+s-1$ and $j_{s+1}>n-q+1$. Then we have $(x_{A_t}:|A_t|\leq n-q)_{\langle n-q\rangle}=(x_{A_1},x_{A_2},\ldots ,x_{A_s})_{\langle n-q\rangle}$.

\begin{Proposition}\label{dual deg n-q}
The ideal $(x_{A_t}:|A_t|\leq n-q)_{\langle n-q\rangle}$ is an initial squarefree lexsegment ideal determined by the monomial $x_{A_s}x_{q+j_s-s+2}\cdots x_n$. 
\end{Proposition}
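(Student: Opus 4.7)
The plan is to show that $(x_{A_1},\dots,x_{A_s})_{\langle n-q\rangle}$ equals the initial squarefree lexsegment ideal $(L^i(w))$, where $w:=x_{A_s}\cdot x_{q+j_s-s+2}\cdots x_n$. A routine check shows $\deg w=(j_s-s+1)+(n-q-j_s+s-1)=n-q$ and $w$ is squarefree, since $q+j_s-s+2>j_s$.

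For the inclusion $\subseteq$, I would identify, for each $t\in\{1,\dots,s\}$, the lex-smallest degree-$(n-q)$ squarefree multiple of $x_{A_t}$. Any such multiple is obtained by padding $A_t$ with $n-q-|A_t|$ indices from $[n]\setminus A_t$; since the largest $n-q-|A_t|$ such indices all lie in $\{j_t+1,\dots,n\}\subseteq[n]\setminus A_t$, this lex-smallest multiple is $w_t:=x_{A_t}\cdot x_{q+j_t-t+2}\cdots x_n$. Comparing $w_t$ with $w_{t+1}$ directly, the first $j_t-t$ smallest elements of both $\supp(w_t)$ and $\supp(w_{t+1})$ agree and equal $[j_t-1]\setminus\{j_1,\dots,j_{t-1}\}$, while the $(j_t-t+1)$-th smallest element is $j_t$ for $w_t$ and $j_t+1$ for $w_{t+1}$, so $w_t>_{lex}w_{t+1}$. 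Iterating, $w_s=w$ is the overall lex-smallest, and hence every degree-$(n-q)$ squarefree multiple of some $x_{A_t}$ is $\geq_{lex}w$.

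For the reverse inclusion, let $u\in\Mon^s_{n-q}(S)$ satisfy $u\geq_{lex}w$, and write $\supp(u)=\{a_1<\cdots<a_{n-q}\}$, $\supp(w)=\{b_1<\cdots<b_{n-q}\}$. If $u=w$ then $x_{A_s}\mid u$; otherwise let $i_0$ be the smallest index with $a_{i_0}\neq b_{i_0}$, so $b_{i_0-1}=a_{i_0-1}<a_{i_0}<b_{i_0}$, which forces $a_{i_0}$ to be a ``missing'' element of $\supp(w)$, i.e., an element of $\{j_1,\dots,j_{s-1}\}\cup\{j_s+1,\dots,q+j_s-s+1\}$. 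If $a_{i_0}=j_{t_1+k}$ lies in a maximal consecutive block $\{j_{t_1},\dots,j_{t_1+r-1}\}\subseteq\{j_1,\dots,j_{s-1}\}$ (so $j_{t_1+i}=j_{t_1}+i$ for $0\leq i\leq r-1$), then $\{a_1,\dots,a_{i_0-1}\}$ equals the set of elements of $\supp(w)$ strictly below $j_{t_1}$, namely $A_{t_1}\setminus\{j_{t_1}\}$, and the combinatorial identity $A_{t_1+k}=(A_{t_1}\setminus\{j_{t_1}\})\cup\{j_{t_1+k}\}$ yields $x_{A_{t_1+k}}\mid u$. If instead $a_{i_0}\in\{j_s+1,\dots,q+j_s-s+1\}$, then $\{a_1,\dots,a_{i_0-1}\}$ consists of all elements of $\supp(w)$ not exceeding $j_s$, which is exactly $A_s$, so $x_{A_s}\mid u$.

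The main obstacle is the careful bookkeeping of where the ``jumps'' of $\supp(w)$ occur and the verification of the identity $A_{t_1+k}=(A_{t_1}\setminus\{j_{t_1}\})\cup\{j_{t_1+k}\}$ when several of the $j_t$'s happen to be consecutive; once this identity is pinned down, the two inclusions combine to yield the proposition.
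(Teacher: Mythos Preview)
Your argument is correct. Both inclusions are established rigorously, and the combinatorial identity $A_{t_1+k}=(A_{t_1}\setminus\{j_{t_1}\})\cup\{j_{t_1+k}\}$ holds exactly when $j_{t_1},\dots,j_{t_1+k}$ are consecutive integers, as you assume. One small remark: you use $u$ for the generic monomial in the reverse inclusion, which clashes with the paper's fixed monomial $u=x_1x_F$; it would be safer to call it $m$.

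Your route differs from the paper's in both directions. For ``$\subseteq$'' the paper argues by contradiction: taking $m=x_{A_t}m'$ with $t$ minimal and assuming $w>_{lex}m$, it observes that $A_t\setminus\{j_t\}\subset A_s$ and derives a contradiction. You instead explicitly compute the lex-smallest degree-$(n-q)$ multiple $w_t$ of each $x_{A_t}$ and establish the chain $w_1>_{lex}\cdots>_{lex}w_s=w$; this is more constructive and makes the role of $w$ as a global minimum transparent. For ``$\supseteq$'' the paper pivots on $j_t:=\min(\supp(m)\cap\supp(v))$ and shows directly that $\{r\in\supp(m):r\leq j_t\}=A_t$, whereas you locate the first index $i_0$ where $\supp(m)$ and $\supp(w)$ diverge and read off the relevant $A_t$ from the ``gap'' containing $a_{i_0}$. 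The two approaches end up singling out the same $t$ (your $j_{t_1+k}$, or $j_s$ in Case~2, is precisely the paper's $\min(\supp(m)\cap\supp(v))$), but your element-by-element comparison makes the case analysis more explicit at the cost of the extra bookkeeping with consecutive blocks.
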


\begin{proof}We start by noticing that if the cardinality of $A_s$ equals $n-q$, then the monomial $x_{A_s}x_{q+j_s-s+2}\cdots x_n=x_{A_s}$.

For the inclusion "$\subseteq$", let $m$ be a minimal monomial generator of $(x_{A_t}:|A_t|\leq n-q)_{\langle n-q\rangle}=(x_{A_1},x_{A_2},\ldots ,x_{A_s})_{\langle n-q\rangle}$, that is $m=x_{A_t}m'$, for some $1\leq t\leq s$ with $A_t\cap \supp(m')=\emptyset$ and we assume that $t$ is the smallest with this property. We have to prove that $m\geq_{lex} x_{A_s}x_{q+j_s-s+2}\cdots x_n$. 

Assume by contradiction that $x_{A_s}x_{q+j_s-s+2}\cdots x_n>_{lex}m=x_{A_t}m'$. We observe that $x_{l}\mid x_{A_t}m'$ and $x_{l}\mid x_{A_s}x_{q+j_s-s+2}\cdots x_n$, for all $l\in A_t$, $l\neq j_t$, thus if $s\neq t$, then we reach a contradiction. Therefore, we must have $t=s$ and by the relation $x_{A_s}x_{q+j_s-s+2}\cdots x_n>_{lex}m=x_{A_t}m'$ we get $x_{q+j_s-s+2}\cdots x_n>_{lex}m'$. This is a contradiction, hence $m\geq_{lex} x_{A_s}x_{q+j_s-s+2}\cdots x_n$.

Conversely, for the inclusion "$\supseteq$", let $m\geq_{lex} x_{A_s}x_{q+j_s-s+2}\cdots x_n$ be a squarefree monomial of degree $n-q$. We claim that $\supp(m)\cap\supp(v)\neq\emptyset$. Indeed, if we assume that $\supp(m)\cap\supp(v)=\emptyset$, by degree considerations, we obtain that $m=x_{[n]\setminus\supp(v)}=x_{\{1,\ldots n\}\setminus\{j_1,\ldots,j_q\}}$. Hence  
$$x_{A_s}x_{q+j_s-s+2}\cdots x_n=x_{\{1,\ldots,j_s\}\setminus\{j_1,\ldots,j_{s-1}\}}x_{q+j_s-s+2}\cdots x_n>_{lex}x_{\{1,\ldots n\}\setminus\{j_1,\ldots,j_q\}}=m,$$
contradiction. 

Therefore $\supp(m)\cap\supp(v)\neq\emptyset$. Denote by $j_t=\min\{i\in[n]:i\in \supp(m)\cap\supp(v)\}$. We claim that 
$$\{r\in \supp(m): r\leq j_t\}=[j_t]\setminus\{j_1,\ldots,j_{t-1}\}=A_t.$$

Indeed, by hypothesis, we have 
$$m=x_{\{r\in \supp(m): r\leq j_t\}}x_{\{r\in \supp(m): r> j_t\}}\geq_{lex} x_{A_s}x_{q+j_s-s+2}\cdots x_n=$$
$$=x_{\{1,\ldots,j_t\}\setminus\{j_1,\ldots,j_{t-1}\}}x_{\{j_t,\ldots,j_s\}\setminus\{j_t,\ldots,j_{s-1}\}}x_{q+j_s-s+2}\cdots x_n.$$
Since $j_t$ is minimal, we get $\{r\in \supp(m): r\leq j_t\}\cap\{j_1,\ldots,j_{t-1}\}=\emptyset$. Hence $x_{\{r\in \supp(m): r\leq j_t\}}=x_{[j_t]\setminus\{j_1,\ldots,j_{t-1}\}}$. Otherwise $x_{\{r\in \supp(m): r\leq j_t\}}<_{lex}x_{[j_t]\setminus\{j_1,\ldots,j_{t-1}\}}$, thus $m<_{lex}x_{A_s}x_{q+j_s-s+2}\cdots x_n$, a contradiction.

It results that $\{r\in \supp(m): r\leq j_t\}=[j_t]\setminus\{j_1,\ldots,j_{t-1}\}=A_t$ and $x_{A_t}\mid m$, thus $m\in (x_{A_t}:|A_t|\leq n-q)_{\langle n-q\rangle}$, which ends the proof.
\end{proof}

Returning to the ideal $I^\vee_{\langle n-q\rangle}$, by (\ref{2}), it follows that $I^\vee_{\langle n-q\rangle}$ is the sum of an initial with a final squarefree lexsegment ideal, both of them generated in the same degree. In the following, in order to determine the ideals $I^\vee_{\langle n-q\rangle}$ with a linear resolution, we will analyze a more general problem.

\medskip
We consider $J=(L^i(w))$ and $K=(L^f(m))$ be initial and final squarefree lexsegment ideals, generated in degree $d$, such that $x_1\mid w$ and $x_1\nmid m$. The ideals $J$ and $K$ have a $d-$linear resolution. We are interested in determining when the sum $J+K$ has a $d-$linear resolution.

\begin{Proposition}\label{intersection}
In the above hypotheses, the ideal $J+K$ has a $d-$linear resolution if and only if the ideal $J\cap K$ has a $(d+1)-$linear resolution.
\end{Proposition}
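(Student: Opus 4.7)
The plan is to apply the short exact sequence
\begin{align*}
0 \to J \cap K \to J \oplus K \to J + K \to 0
\end{align*}
(with maps $a \mapsto (a,-a)$ and $(a,b) \mapsto a+b$), pass to the induced long exact sequence of $\Tor_\bullet(-,k)$, and read off the biconditional via the classical regularity bounds
\begin{align*}
\reg(J+K) &\leq \max\{\reg J,\ \reg K,\ \reg(J \cap K) - 1\}, \\
\reg(J \cap K) &\leq \max\{\reg J,\ \reg K,\ \reg(J+K) + 1\}.
\end{align*}
Since $J$ and $K$ have $d$-linear resolutions, $\reg J = \reg K = d$, and these bounds simplify accordingly.

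The preparatory step is to observe that the hypotheses $x_1 \mid w$ and $x_1 \nmid m$ force $(J \cap K)_d = 0$, so that $J \cap K$ is generated in degrees $\geq d + 1$. Any minimal generator $u \in G(J)$ satisfies $u \geq_{lex} w$, and since $w$ begins with $x_1$ (the largest variable) every lex-greater squarefree monomial of the same degree must also begin with $x_1$; hence $x_1 \mid u$. Symmetrically, every $u' \in G(K)$ satisfies $u' \leq_{lex} m$ with $x_1 \nmid m$, so $x_1 \nmid u'$. In particular $w >_{lex} m$, and the degree-$d$ parts $\{u : u \geq_{lex} w\}$ and $\{u : u \leq_{lex} m\}$ of $J$ and $K$ are disjoint, giving $(J \cap K)_d = 0$.

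For the forward direction, I would assume $J+K$ has a $d$-linear resolution, so $\reg(J+K) = d$; the second bound then yields $\reg(J \cap K) \leq d + 1$. Since $J \cap K$ is nonzero with all generators in degrees $\geq d + 1$, the $\Tor_0$-contribution forces $\reg(J \cap K) \geq d + 1$, and equality then forces all generators to lie in degree exactly $d + 1$; this is precisely a $(d+1)$-linear resolution. For the converse, $\reg(J \cap K) = d + 1$ substituted into the first bound gives $\reg(J + K) \leq d$, and since $J + K$ is generated in degree $d$, we conclude $\reg(J+K) = d$, yielding a $d$-linear resolution. The only non-routine step is the combinatorial observation on degree-$d$ generators, and it is precisely where the hypotheses $x_1 \mid w$ and $x_1 \nmid m$ are used: without them $J \cap K$ could acquire degree-$d$ generators and the $(d+1)$-linearity side of the equivalence would fail.
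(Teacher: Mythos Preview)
Your proof is correct and follows essentially the same approach as the paper: both use the short exact sequence $0\to J\cap K\to J\oplus K\to J+K\to 0$ and its long exact Tor sequence, together with the observation that the hypotheses $x_1\mid w$ and $x_1\nmid m$ force $(J\cap K)_d=0$. The only difference is cosmetic: the paper argues directly with the vanishing of individual $\Tor$ groups (and cites \cite[Lemma~3.2]{O} for the backward direction), whereas you package the same information into the standard regularity inequalities, which makes the write-up slightly more streamlined.
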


\begin{proof}
For the implication "$\Leftarrow$", assume that $J\cap K$ is generated in degree $d+1$ and has a linear resolution.
We use the following exact sequence of $S-$modules
$$0\rightarrow J\cap K\rightarrow J\oplus K\rightarrow J+K\rightarrow 0,$$
where $J\cap K$ has a $(d+1)-$linear resolution and $J\oplus K$ has a $d-$linear resolution. By \cite [Lemma 3.2]{O}, we obtain that $J+K$ has a $d-$linear resolution.

For the converse implication "$\Rightarrow$", assume that $J+K$ has a $d-$linear resolution. We want to prove that $J\cap K$ is generated in degree $d+1$ and has a linear resolution. The exact sequence
$$0\rightarrow J\cap K\rightarrow J\oplus K\rightarrow J+K\rightarrow 0,$$
yields the exact sequence 
$$\ldots\rightarrow \Tor_1(J+K,k)\rightarrow \Tor_0(J\cap K,k)\rightarrow \Tor_0(J,k)\oplus \Tor_0(K,k)\rightarrow\ldots.$$
This implies $J\cap K$ is generated in degree $d$ or $d+1$. Taking into account that every minimal monomial generator of $J$ is divisible by $x_1$ and $K\subset k[x_2,\ldots,x_n]$, it results that $J\cap K$ is generated in degree $d+1$.

Next, we prove that $J\cap K$ has a linear resolution, that is $\Tor_i(J\cap K,k)_{i+j}=0$, for all $j\neq d+1$. We consider the exact sequence
$$\ldots\rightarrow \Tor_i(J+K,k)_{i+j}\rightarrow \Tor_{i-1}(J\cap K,k)_{i+j}\rightarrow \Tor_{i-1}(J,k)_{i+j}\oplus \Tor_{i-1}(K,k)_{i+j}\rightarrow\ldots$$
For every $j\neq d$ we have $\Tor_{i-1}(J,k)_{i+j}\oplus \Tor_{i-1}(K,k)_{i+j}=0$, since $J$ and $K$ have a linear resolution, and $\Tor_i(J+ K,k)_{i+j}=0$. Hence $\Tor_{i-1}(J\cap K,k)_{i+j}=0$, for all $j\neq d$, that is $\Tor_{i-1}(J\cap K,k)_{(i-1)+(j+1)}=0$, for all $j\neq d$.
\end{proof}

\begin{Lemma}\label{intersection d+1}
In the same hypothesis, the ideal $J\cap K$ is generated in degree $d+1$ if and only if $J\cap K=(L(x_1m,wx_{\max([n]\setminus\supp(w))}))$.
\end{Lemma}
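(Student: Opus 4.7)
The plan is to describe the squarefree monomials of degree $d+1$ lying in $J\cap K$ and identify them as the lexsegment $L(x_1 m,\,wx_\ell)$, where $\ell=\max([n]\setminus\supp(w))$. The lemma then reduces to the general principle that $J\cap K$ is generated in degree $d+1$ exactly when these elements already generate the whole ideal.

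First, I would characterize membership. Every minimal generator of $J=(L^i(w))$ is divisible by $x_1$ because $w$ is and every squarefree degree-$d$ monomial lex-larger than $w$ must also start with $x_1$; dually, no generator of $K=(L^f(m))$ is divisible by $x_1$. For a squarefree monomial $\mu$ of degree $d+1$, the lex-largest degree-$d$ divisor is $\mu/x_{\max(\supp(\mu))}$ and the lex-smallest is $\mu/x_{\min(\supp(\mu))}$, so $\mu\in J\cap K$ if and only if $x_1\mid\mu$ and, writing $\mu=x_1\nu$, both $\nu\leq_{lex} m$ (equivalent to $\mu\in K$) and $\mu/x_{\max(\supp(\mu))}\geq_{lex} w$ (equivalent to $\mu\in J$) hold. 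Since $w>_{lex} m$, $J\cap K$ contains no element of degree $d$, so any degree-$(d+1)$ element of $J\cap K$ is automatically a minimal generator.

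Second, I would show these two conditions on $\mu$ are equivalent to $x_1 m\geq_{lex}\mu\geq_{lex} wx_\ell$. Writing $w=x_1x_{a_2}\cdots x_{a_d}$ and $\nu=x_{c_1}\cdots x_{c_d}$ with $c_1<\cdots<c_d$, the upper bound $\mu\leq_{lex} x_1 m$ is immediate from $\nu\leq_{lex} m$. For the lower bound, the condition on $\mu/x_{c_d}$ rephrases as $(c_1,\ldots,c_{d-1})\leq(a_2,\ldots,a_d)$ in integer lex order, while $\mu\geq_{lex} wx_\ell$ is the analogous inequality involving the sorted tuple of indices of $wx_\ell/x_1$. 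When $n\notin\supp(w)$ (so $\ell=n$), that tuple is just $(a_2,\ldots,a_d,n)$ and the equivalence is direct. When $n\in\supp(w)$, and so $\ell<n$, the value $\ell$ is inserted between some $a_{s-1}$ and $a_s$ in the sorted tuple; the delicate subcase is when $(c_1,\ldots,c_{s-2})=(a_2,\ldots,a_{s-1})$ and $c_{s-1}>\ell$. Here the maximality of $\ell$ among the indices missing from $\supp(w)$ forces $c_{s-1}=a_s$, and propagating this rigidity through the later positions eventually forces $c_d>n$, a contradiction. The converse implication follows from the monotonicity that for squarefree monomials of equal degree, $\mu_1\geq_{lex}\mu_2$ implies $\mu_1/x_{\max(\supp(\mu_1))}\geq_{lex}\mu_2/x_{\max(\supp(\mu_2))}$.

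The lemma then follows at once: the degree-$(d+1)$ elements of $J\cap K$ form exactly $L(x_1 m,wx_\ell)$, so the ``if'' direction is trivial, and for the ``only if'' direction, when $J\cap K$ is generated in degree $d+1$ its minimal generating set is precisely these elements, yielding $J\cap K=(L(x_1 m,wx_\ell))$. The main obstacle is the case analysis in the second step when $n\in\supp(w)$, which essentially uses the maximality of $\ell$ among the missing indices to rule out the problematic ``overshoot'' configuration.
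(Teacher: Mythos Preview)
Your proposal is correct and follows essentially the same route as the paper: identify the degree-$(d+1)$ squarefree monomials of $J\cap K$ with the lexsegment $L(x_1m,\,wx_\ell)$, after which the equivalence is formal. The only difference is level of detail. The paper dispatches both containments in two lines, implicitly using that the squarefree shadow of an initial (resp.\ final) lexsegment is again initial (resp.\ final) with endpoint $wx_\ell$ (resp.\ $x_1m$), and then invokes the same monotonicity $\mu_1\geq_{lex}\mu_2 \Rightarrow \mu_1/x_{\max(\mu_1)}\geq_{lex}\mu_2/x_{\max(\mu_2)}$ that you state, to conclude $\gamma/x_{\max(\gamma)}\geq_{lex} w$ directly from $\gamma\geq_{lex} wx_\ell$. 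Your explicit case analysis when $n\in\supp(w)$ (so that $(wx_\ell)/x_{\max(wx_\ell)}\neq w$) is not wrong, but it is reproving by hand precisely the step that the monotonicity already yields once one observes $(wx_\ell)/x_{\max(wx_\ell)}\geq_{lex} w$; in other words, your ``delicate subcase'' never actually occurs, and the paper simply does not enter it.
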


\begin{proof}
If $J\cap K$ is generated in degree $d+1$, then it is easy to see that $G(J\cap K)=L(x_1m,wx_{\max([n]\setminus\supp(w))})$. Indeed, $G(J\cap K)\subseteq L^i(wx_{\max([n]\setminus\supp(w))})\cap L^f(x_1m)=L(x_1m,wx_{\max([n]\setminus\supp(w))})$. For the other inclusion, let $\gamma\in L(x_1m,wx_{\max([n]\setminus\supp(w))})$. It is clear that $x_1\mid \gamma$. We obtain that $m\geq \gamma/x_1$, that is $\gamma\in K$. Moreover, $\gamma\in J$ because $\gamma/x_{\max(\gamma)}\geq_{lex}w$. Hence $\gamma\in J\cap K$, thus $\gamma\in G(J\cap K)$, as desired.

The converse is clear.
\end{proof}

We return to the problem of characterizing the completely squarefree lexsegment ideals which are sequentially Cohen--Macaulay.

Let $I\subset S$ be a completely squarefree lexsegment ideal generated in degree $q$, determined by the monomials $u=x_1x_{i_2}\cdots x_{i_q}=x_1x_F$, $2\leq i_2<\ldots<i_q\leq n$ and $v=x_{j_1}\cdots x_{j_q}$, with $2\leq j_1<\ldots<j_q\leq n$, which is not an initial or a final squarefree lexsegment ideal. Denote by $A_t=[j_t]\setminus\{j_1,\ldots,j_{t-1}\}$ for $1\leq t\leq q$. Let $s$ be the unique index such that $|A_s|\leq n-q$ and $|A_{s+1}|=n-q+1$.

Recall that if $x_2\nmid u$, then 
$$I^\vee_{\langle n-q+1\rangle}=(x_{A_t}:|A_t|\leq n-q)_{\langle n-q+1\rangle}+(x_{A_t}:|A_t|=n-q+1, u/x_1\geq_{lex} v/x_{j_t})+$$
$$+(x_{G^c}:G\subset[n],\ |G|=q-1,\ G\cap A_t\neq\emptyset,1\leq t\leq q,u/x_1\geq_{lex} x_G)+$$
$$+(x_G: x_{F^c\setminus\{1\}}>_{lex}x_G,|G|=n-q)_{\langle n-q+1\rangle}$$
and if $x_2\mid u$, then 
$$I^\vee_{\langle n-q+1\rangle}=(x_{A_t}:|A_t|\leq n-q)_{\langle n-q+1\rangle}+(x_{A_t}:|A_t|=n-q+1, u/x_1\geq_{lex} v/x_{j_t})+$$
$$+(x_{G^c}:G\subset[n]\setminus\{1\},\ |G|=q-1,\ G\cap A_t\neq\emptyset,1\leq t\leq q,u/x_1\geq_{lex} x_G)+$$
$$+(x_G:x_{F^c\setminus\{1\}}>_{lex}x_G,|G|=n-q)_{\langle n-q+1\rangle}+(x_G:x_{G\setminus\min(G)}\geq_{lex}x_{F^c\setminus\{1\}},|G|=n-q+1).$$

\begin{Theorem}
In the above hypotheses, $I$ is sequentially Cohen--Macaulay if and only if $(L^i(x_{A_s}x_{q+j_s-s+2}\cdots x_n))\cap(L^f(\succ(x_{F^c\setminus\{1\}})))$ has a $(n-q+1)-$linear resolution.
\end{Theorem}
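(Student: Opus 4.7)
By \cite[Theorem 8.2.20]{HeHi}, it suffices to decide when $I^\vee$ is componentwise linear, i.e.\ when $I^\vee_{\langle j\rangle}$ has a linear resolution for every relevant $j$. Since the facets of the simplicial complex $\Delta$ associated with $I$ have cardinality $q-1$, $q$, or greater than $q$ (by Theorem \ref{prim dec completely}), the generators of $I^\vee$ sit in degrees at most $n-q+1$, so only the three ranges $j<n-q$, $j=n-q$, $j=n-q+1$ need inspection. The plan is to dispose of the two extreme ranges by soft arguments and to concentrate the whole content of the theorem in the middle degree $n-q$.

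For $j<n-q$, Lemma \ref{dual lower deg} already guarantees a linear resolution, since the relevant part of $I^\vee$ is the canonical critical ideal $(x_{A_t}:|A_t|\leq n-q)$. For $j=n-q+1$, I would sidestep both branches ($x_2\mid u$ versus $x_2\nmid u$) of Theorem \ref{prim dec completely} by a face-level observation: any $G^c\subset[n]$ with $|G^c|=q-1$ satisfies $x_{G^c}\notin I$ because $I$ is generated in degree $q$, so $G^c$ is a face of $\Delta$ and hence $x_G\in I^\vee$. Consequently $I^\vee_{\langle n-q+1\rangle}$ equals the ideal $I_{n,n-q+1}$ of all squarefree monomials of degree $n-q+1$, which is well known to have a linear resolution.

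Everything thus reduces to degree $n-q$. Combining (\ref{2}) with Proposition \ref{dual deg n-q}, one writes
$$I^\vee_{\langle n-q\rangle}\;=\;J+K,$$
where $J=(L^i(x_{A_s}x_{q+j_s-s+2}\cdots x_n))$ and $K=(L^f(\succ(x_{F^c\setminus\{1\}})))$ are both generated in degree $n-q$. A short check shows that every generator of $J$ is divisible by $x_1$ (since $1\in A_s$, because $j_1\geq 2$), whereas no generator of $K$ is: the lex successor $\succ(x_{F^c\setminus\{1\}})$ is strictly smaller than $x_{F^c\setminus\{1\}}$, whose support already misses $1$, so $1$ cannot appear in the support of any monomial of $K$. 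This is exactly the hypothesis of Proposition \ref{intersection}, which then yields the equivalence ``$J+K$ has a $(n-q)$-linear resolution if and only if $J\cap K$ has a $(n-q+1)$-linear resolution''. Assembling the three ranges produces precisely the iff in the statement.

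The step I expect to be the fiddliest is the ``final'' analogue of Proposition \ref{dual deg n-q}, namely the identification of the second summand of (\ref{2}) with the final squarefree lexsegment ideal $(L^f(\succ(x_{F^c\setminus\{1\}})))$. This requires care with lex successors among squarefree monomials of a fixed degree, and one should verify, using the standing assumption that $I$ is neither an initial nor a final squarefree lexsegment ideal, that $\succ(x_{F^c\setminus\{1\}})$ exists (so that $K\neq 0$ and the statement is not vacuous).
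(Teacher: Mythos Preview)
Your argument is correct and follows the same overall architecture as the paper: reduce to componentwise linearity of $I^\vee$, dispose of degrees $j<n-q$ via Lemma \ref{dual lower deg}, identify $I^\vee_{\langle n-q\rangle}$ as $J+K$ via Proposition \ref{dual deg n-q} and then invoke Proposition \ref{intersection}, and show $I^\vee_{\langle n-q+1\rangle}=I_{n,n-q+1}$.

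The one genuine difference is in the last step. The paper proves $I^\vee_{\langle n-q+1\rangle}=I_{n,n-q+1}$ by an explicit case analysis through the primary decomposition of Theorem \ref{prim dec completely}, treating $x_2\nmid u$ and $x_2\mid u$ separately and checking, for each monomial $x_{H^c}$ of degree $n-q+1$, which summand of $I^\vee$ it lands in. Your face-level shortcut---every $(q-1)$-subset $G^c$ is a face of $\Delta$ since $I$ is generated in degree $q$, hence $x_G\in I_{\Delta^\vee}$ by the basic Alexander duality correspondence $G^c\in\Delta\Leftrightarrow x_G\in I^\vee$---bypasses all of this and is considerably cleaner. The paper's approach has the minor advantage of exhibiting, for each monomial, exactly which prime component it comes from, but for the purpose at hand your argument is both shorter and more transparent.

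Two small remarks. First, your worry about the ``fiddly'' identification of the second summand of (\ref{2}) with $(L^f(\succ(x_{F^c\setminus\{1\}})))$ is unfounded: this is immediate from the definition of $\succ$, since $\{x_G:|G|=n-q,\ x_{F^c\setminus\{1\}}>_{lex}x_G\}$ is exactly $L^f(\succ(x_{F^c\setminus\{1\}}))$. Second, your verification that $x_1$ divides every generator of $J$ and no generator of $K$ (needed for Proposition \ref{intersection}) is a useful explicit check that the paper leaves implicit.
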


\begin{proof}
We have to establish when the Alexander dual $I^\vee$ is componentwise linear. By Lemma \ref{dual lower deg}, $I^\vee_{\langle j\rangle}$ has a linear resolution, for all $j<n-q$.

By Proposition \ref{dual deg n-q}, we have $I^\vee_{\langle n-q\rangle}=(L^i(x_{A_s}x_{q+j_s-s+2}\cdots x_n))+(L^f(\succ(x_{F^c\setminus\{1\}})))$. Hence $I^\vee_{\langle n-q\rangle}$ has a linear resolution if and only if, using Proposition \ref{intersection}, 
$$(L^i(x_{A_s}x_{q+j_s-s+2}\cdots x_n))\cap(L^f(\succ(x_{F^c\setminus\{1\}})))$$
has a $(n-q+1)-$linear resolution. 

We end the proof by showing that $I^\vee_{\langle n-q+1\rangle}=I_{n,n-q+1}$. 

\textit{Case 1.} We assume that $x_2\nmid u$. It is clear the inclusion $I^\vee_{\langle n-q+1\rangle}\subseteq I_{n,n-q+1}$. For the other one, let $m=x_{H^c}$ be a squarefree monomial of degree $n-q+1$. If $x_{F^c}>_{lex}m$, then $m\in (L^f(\succ(x_{F^c})))=(L^f(\succ(x_{F^c\setminus\{1\}})))_{\langle n-q+1\rangle}$.

Assume that $m=x_{H^c}\geq_{lex}x_{F^c}$, equivalently $x_H\leq_{lex} x_{F}$. If $H\cap A_t\neq\emptyset$ for all $1\leq t\leq q$, then $m\in (x_{G^c}: |G|=q-1, G\subset[n],G\cap A_t\neq\emptyset,1\leq t\leq q,u/x_1\geq_{lex} x_G)\subseteq I^\vee_{\langle n-q+1\rangle}$. Otherwise, if there is some $t$ such that $H\cap A_t=\emptyset$, then we must have $j_t-t+1+q-1\leq n$, that is $j_t-t+1\leq n-q+1$. If $j_t-t+1\leq n-q$, then $x_{A_t}\in (x_{A_t}:|A_t|\leq n-q)$. Moreover, by $H\cap A_t=\emptyset$ we obtain $A_t\subseteq H^c$, thus $x_{A_t}\mid m$ and $m\in (x_{A_t}:|A_t|\leq n-q)_{\langle n-q+1\rangle}$. If $j_t-t+1=n-q+1$, then $v=x_{j_1}\cdots x_{j_{t-1}}x_{n-q+t}\cdots x_n$. In particular, $H^c=A_t$ and by the relation $m=x_{H^c}=x_{A_t}\geq_{lex}x_{F^c}$ we get $u/x_1=x_F\geq_{lex} v/x_{j_t}$. Hence $m\in (x_{A_t}: |A_t|=n-q+1,\ u/x_1\geq_{lex} v/x_{j_t})\subseteq I^\vee_{\langle n-q+1\rangle}$, which ends the proof.

\textit{Case 2.} For the case when $x_2\mid u$, we have the following. Let $m=x_{H^c}$ be a squarefree monomial of degree $n-q+1$. 

We analyze separately the cases when $x_1\mid m$ and $x_1\nmid m$.

Firstly, if $x_1\nmid m$, then either $x_{H^c\setminus\min(H^c)}\geq_{lex}x_{F^c\setminus\{1\}}$, thus $m=x_{H^c}\in G(I^\vee_{\langle n-q+1\rangle})$, either $x_{H^c\setminus\min(H^c)}<_{lex}x_{F^c\setminus\{1\}}$ which implies $x_{H^c\setminus\min(H^c)}\in G(I^\vee_{\langle n-q\rangle})$, thus $x_{H^c}\in I^\vee_{\langle n-q+1\rangle}$. 

Assume that $m$ is divisible by $x_1$. By the minimal primary decomposition, the following cases remains to study:
\begin{itemize}
	\item $H\cap A_t\neq\emptyset$ for all $1\leq t\leq q$ and $u/x_1<_{lex} x_H$;
	\item there exists an integer $1\leq t\leq q$ such that $H\cap A_t=\emptyset$,
\end{itemize}

If we assume that $H\cap A_t\neq\emptyset$ for all $1\leq t\leq q$ and $u/x_1=x_F<_{lex} x_H$, then $m=x_{H^c}<_{lex}x_{F^c}$. Since $x_1\mid x_{H^c}$, we obtain $x_{H^c\setminus\{1\}}\in (L^f(\succ(x_{F^c\setminus\{1\}})))$, thus $x_{H^c}\in I^\vee_{\langle n-q+1\rangle}$.

For the case when there is some $1\leq t\leq q$ such that $H\cap A_t=\emptyset$, we obtain that $|H|+|A_t|\leq n$. This implies that $j_t-t+1\leq n-q+1$. If $|A_t|=j_t-t+1\leq n-q$, then $x_{A_t}\in(x_{A_t}:|A_t|\leq n-q)$ and $x_{A_t}\mid x_{H^c}$, because $A_{t}\subset H^c$. Thus $m=x_{H^c}\in (x_{A_t}:|A_t|\leq n-q)_{\langle n-q+1\rangle}$. Finally, if $|A_t|=j_t-t+1=n-q+1$, then $v=x_{j_1}\cdots x_{j_{t-1}}x_{n-q+t}\cdots x_n$ and $H^c=A_t$. Moreover, if $u/x_1=x_F\geq_{lex} v/x_{j_t}$, then $m=x_{A_t}\in(x_{A_t}:|A_t|=n-q+1, u/x_1\geq_{lex} v/x_{j_t})$. Otherwise, if $u/x_1=x_F<v/x_{j_t}$, we obtain $x_{F^c}>_{lex} x_{A_t}=x_{H^c}$ which implies $x_{H^c\setminus\{1\}}\in G(I^\vee_{\langle n-q\rangle})$, thus $m=x_{H^c}\in I^\vee_{\langle n-q+1\rangle}$.
\end{proof}

\section{Bounds for $\depth(S/I)$}

In this section we will give lower and upper bounds for $\depth(S/I)$, where $I\subset S$ is an arbitrary squarefree lexsegment ideal. 

We begin with a very useful lemma, which gives us an equivalent condition of the property of an ideal to have a linear resolution. 

\begin{Lemma}\label{lex ideal complementary}
Let $I=(L(x_G,x_H))\subset S$ be a squarefree lexsegment ideal, with $|G|=|H|$. 
Then $I$ has a linear resolution if and only if the ideal $(L(x_{[n]\setminus H},x_{[n]\setminus G}))$ has a linear resolution.
\end{Lemma}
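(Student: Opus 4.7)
The plan is to combine the elementary complementation bijection on squarefree monomials with Bonanzinga's characterization \cite{B} of squarefree lexsegment ideals having a linear resolution.

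First, the map $c\colon x_T\mapsto x_{[n]\setminus T}$ on squarefree monomials of any fixed degree $q$ reverses the lexicographic order: for $|A|=|B|=q$, one has $x_A>_{lex}x_B$ if and only if $x_{[n]\setminus A}<_{lex}x_{[n]\setminus B}$. This is a direct check; letting $i$ be the smallest element of the symmetric difference $A\triangle B$, whichever of $A,B$ contains $i$ wins the lex comparison, and the roles are interchanged for the complements (since $i$ now lies in the \emph{other} side). As an immediate corollary, $c$ sends the squarefree lexsegment $L(x_G,x_H)$ of degree $q$ bijectively onto the squarefree lexsegment $L(x_{[n]\setminus H},x_{[n]\setminus G})$ of degree $n-q$, so the two ideals appearing in the statement correspond under complementation of generator supports.

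Next, I would recall Bonanzinga's explicit combinatorial criterion from \cite{B} for a squarefree lexsegment ideal $(L(u,v))$ to have a linear resolution, which is phrased in terms of the indices appearing in the supports of $u$ and $v$. The final step is then a direct inspection: the inequalities among the indices of $G$ and $H$ entering the criterion transform, under the substitution $(G,H)\mapsto([n]\setminus H,[n]\setminus G)$, into analogous inequalities for the indices of $[n]\setminus H$ and $[n]\setminus G$, thanks to the order reversal just established.

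The main obstacle is cosmetic rather than conceptual: Bonanzinga's criterion is stated asymmetrically, with distinct conditions imposed on the ``first'' monomial $u$ and on the ``last'' monomial $v$ of the lexsegment, so its $c$-invariance is not immediately apparent and one has to rewrite the conditions so that the symmetry under complementation becomes manifest. A duality-free alternative is to apply the Eagon--Reiner theorem: having a $q$-linear resolution is equivalent to Cohen--Macaulayness of the Alexander dual complex, and one observes that the facets of $(\Delta')^{\vee}$ are precisely the supports of the $q$-generators of $(L(x_G,x_H))$ while the facets of $\Delta^{\vee}$ are their $[n]$-complements; the bijection $c$ then makes the simultaneous Cohen--Macaulayness of these two pure complexes combinatorially transparent. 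In either approach, the entire nontrivial content of the lemma is the verification that the chosen characterization is invariant under $c$.
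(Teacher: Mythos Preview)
Your two routes are both genuinely different from the paper's argument, which invokes neither Bonanzinga's criterion nor Eagon--Reiner. The paper attempts a direct syzygy correspondence: if $x_{F_1}x_{T_1}=x_{F_2}x_{T_2}$ (with $F_i\cap T_i=\emptyset$) is a relation among generators $x_{T_1},x_{T_2}$ of $I$, then $x_{F_2}x_{[n]\setminus T_1}=x_{F_1}x_{[n]\setminus T_2}$ is a relation among the complemented generators with the same degree shift, so linear relations go to linear relations. (The paper phrases this as ``$S/I$ and $S/I'$ are isomorphic as modules'', which is not literally correct --- for $I=(L(x_1x_2,x_1x_5))\subset k[x_1,\dots,x_5]$ the total Betti numbers of $I$ and of $I'=(L(x_2x_3x_4,x_3x_4x_5))$ are $(4,6,4,1)$ and $(4,3)$ respectively --- but the relation-matching is the intended mechanism.)

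Your first approach, checking that Bonanzinga's explicit criterion is invariant under $(G,H)\mapsto([n]\setminus H,[n]\setminus G)$, is sound in principle and would yield a complete proof. But you have not carried out the verification, and since Bonanzinga's conditions split into several asymmetric subcases, that case-check is where all the work lies. As written this is a plan, not a proof.

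Your second approach has a real gap. You correctly observe that the facets of $\Delta^{\vee}$ and of $(\Delta')^{\vee}$ are each other's complements in $[n]$, but the assertion that this makes their simultaneous Cohen--Macaulayness ``combinatorially transparent'' is unjustified. Replacing every facet of a pure complex by its $[n]$-complement does \emph{not} preserve Cohen--Macaulayness in general, and for the lexsegment complexes at hand this equivalence is exactly the content of the lemma. Without an independent argument --- for instance, producing explicit shellings of both dual complexes directly from the lex order --- this route is circular.
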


\begin{proof}
We prove that $S/(L(x_G,x_H))$ and $S/(L(x_{[n]\setminus H},x_{[n]\setminus G}))$ are isomorphic as modules. To this purpose, it is enough to demonstrate that if we have a  relation between the minimal monomial generators of $(L(x_G,x_H))$, then we obtain the same relation in $(L(x_{[n]\setminus H},x_{[n]\setminus G}))$. 

Let $x_G\geq_{lex} m_1>_{lex}m_2\geq_{lex} x_H$ be two minimal monomial generators of $I$ and let $x_{F_1}m_1-x_{F_2}m_2=0$ be a relation. Denote $m_1=x_{T_1}$ and $m_2=x_{T_2}$. Then the relation $x_{F_1}m_1=x_{F_2}m_2$ is equivalent to $x_{[n]\setminus (T_{1}\cup F_1)}=x_{[n]\setminus (T_2\cup F_2)}$. Equivalently, $x_{F_2}x_{[n]\setminus T_1}=x_{F_1}x_{[n]\setminus T_2}$, which give us the same relation between the minimal monomial generators $x_{[n]\setminus T_1}$ and $x_{[n]\setminus T_2}$ of $(L(x_{[n]\setminus H},x_{[n]\setminus G}))$. In particular, they have the same linear relations. 
\end{proof}

Next, we will give upper and lower bounds for the depth of an arbitrary squarefree lexsegment ideal.

Let $I$ be an arbitrary squarefree lexsegment ideal, not necessarily complete. As it follows from the proof of Corollary \ref{depth finale}, one can see that $\depth(S/I)\geq q-1$. The following result will characterize the ideals with $\depth(S/I)>q-1$.

Denote $\succ(m)=\max\{w: m>_{lex}w\}$ and $\pred(m)=\min\{w: w>_{lex}m\}$.

\begin{Proposition}
Let $I=(L(u,v))$ be an arbitrary squarefree lexsegment ideal, with $x_1\mid u$ and $x_1\nmid v$. Then $\depth(S/I)>q-1$ if and only if $\succ(v)/x_{\max(\succ(v))}\geq_{lex}\pred(u)/x_1$ and $(L(\succ(v)/x_{\max(\succ(v))},\pred(u)/x_1))$ has a linear resolution. 
\end{Proposition}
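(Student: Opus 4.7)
The plan is to invoke Hibi's skeleton criterion: by Lemma \ref{CM skeleton}, $\depth(S/I) > q-1$ is equivalent to $\Delta^{(q-1)}$ being Cohen--Macaulay (where $\Delta$ is the simplicial complex with $I_\Delta = I$), and by the Eagon--Reiner theorem (\cite[Theorem 8.2.20]{HeHi}) this is in turn equivalent to $(I_{\Delta^{(q-1)}})^\vee$ having a linear resolution. So I would compute this Alexander dual explicitly and reduce the linear-resolution question to the two stated conditions via the machinery of Section 3.

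Write $\tilde m := x_{[n]\setminus\supp(m)}$, and note that complementation reverses the lex order, so $\widetilde{\pred(m)} = \succ(\tilde m)$ and $\widetilde{\succ(m)} = \pred(\tilde m)$. The facets of $\Delta^{(q-1)}$ are the $q$-subsets $F \subseteq [n]$ with $x_F \notin I$ (contributing degree-$(n-q)$ generators $x_{F^c}$ to $(I_{\Delta^{(q-1)}})^\vee$) together with any $(q-1)$-element facets of $\Delta$ (contributing degree-$(n-q+1)$ generators). Since $x_F \notin L(u,v)$ iff $\tilde x_F <_{lex} \tilde u$ or $\tilde x_F >_{lex} \tilde v$, the degree-$(n-q)$ part of $(I_{\Delta^{(q-1)}})^\vee$ equals $L^i(\pred(\tilde v)) + L^f(\succ(\tilde u))$. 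Using that $x_1 \mid \pred(\tilde v)$ and $x_1 \nmid \succ(\tilde u)$, Proposition \ref{intersection} applies and reduces the question to whether $L^i(\pred(\tilde v)) \cap L^f(\succ(\tilde u))$ has an $(n-q+1)$-linear resolution.

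By Lemma \ref{intersection d+1} this intersection is generated in degree $n-q+1$ exactly when it equals the lexsegment $L\bigl(x_1\succ(\tilde u),\,\pred(\tilde v)\cdot x_{\max([n]\setminus\supp(\pred(\tilde v)))}\bigr)$, and by Lemma \ref{lex ideal complementary} its linear resolution is equivalent to the linear resolution of the complementary lexsegment. Translating back through the identities above, the two bounds complement to $\pred(u)/x_1$ and $\succ(v)/x_{\max(\succ(v))}$ respectively, so nonemptiness of the lexsegment becomes the first stated inequality, and the complementary linear resolution gives the second stated condition. The hardest part will be the complementation bookkeeping---verifying the identities $\widetilde{\pred(m)} = \succ(\tilde m)$ and tracking how $\max$ and division by $x_1$ behave---together with arguing that any potential degree-$(n-q+1)$ generators of $(I_{\Delta^{(q-1)}})^\vee$ coming from $(q-1)$-facets of $\Delta$ do not produce an additional obstruction beyond the two stated conditions.
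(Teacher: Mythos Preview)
Your plan matches the paper's proof almost exactly: reduce via Lemma~\ref{CM skeleton} and Eagon--Reiner to the linear resolution of $I_{\Gamma^\vee}$ where $\Gamma=\Delta^{(q-1)}$, identify the degree-$(n-q)$ part as the sum of an initial and a final squarefree lexsegment, and then apply Proposition~\ref{intersection}, Lemma~\ref{intersection d+1}, and Lemma~\ref{lex ideal complementary} to translate the intersection condition back to the stated one on $\succ(v)$ and $\pred(u)$. The paper addresses your final concern by writing $\Gamma=\Gamma_1\cup\Gamma_2$ with $\Gamma_1,\Gamma_2$ generated by the $q$-faces on either side of the lexsegment, so that $I_{\Gamma^\vee}=I_{\Gamma_1^\vee}+I_{\Gamma_2^\vee}$ is generated purely in degree $n-q$ and no separate $(q-1)$-facet obstruction needs to be analyzed; your instinct that this identification deserves careful checking is sound.
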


\begin{proof}
Let $\Delta$ be the simplicial complex such that $I=I_{\Delta}$. By Lemma \ref{CM skeleton}, one has $\depth(S/I)>q-1$ if and only if $\Delta^{(q-1)}$ is Cohen--Macaulay. Equivalently, by Eagon--Reiner theorem, $I_{(\Delta^{(q-1)})^\vee}$ has a linear resolution. We denote by $\Gamma=\Delta^{(q-1)}$ and by $\Gamma_1=\langle\{1,s_2,\ldots ,s_q\}: x_1x_{s_2}\cdots x_{s_q}>_{lex}u\rangle$ and $\Gamma_2=\langle\{t_1,\ldots ,t_q\}: v>_{lex}x_{t_1}\cdots x_{t_q}\rangle$. It is clear that $\Gamma=\Gamma_1\cup\Gamma_2$, equivalent, by \cite{O1}, to $I_{\Gamma^\vee}=I_{\Gamma_1^\vee}+I_{\Gamma_2^\vee}$. Since $I_{\Gamma_1^\vee}=I(\Gamma_1^c)$, we obtain that $I_{\Gamma_1^\vee}=(L^f(\succ(x_{[n]\setminus\supp(u)}))$. In a similar way, one has $I_{\Gamma_2^\vee}=(L^i(\pred(x_{[n]\setminus\supp(v)}))$. 

Then $I_{\Gamma^\vee}=I_{\Gamma_1^\vee}+I_{\Gamma_2^\vee}$ has a linear resolution if and only if, by Proposition \ref{intersection}, the ideal $I_{\Gamma_1^\vee}\cap I_{\Gamma_2^\vee}$ has an $(n-q+2)-$linear resolution. Equivalently, by Lemma \ref{intersection d+1}, $(L(x_1\succ(x_{[n]\setminus\supp(u)}),\pred(x_{[n]\setminus\supp(v)})x_{\max([n]\setminus([n]\setminus\supp(v)))})$ has an $(n-q+2)-$linear resolution. By Lemma \ref{lex ideal complementary}, this is equivalent to $(L(\succ(v)/x_{\max(\succ(v))},\pred(u)/x_1))$ has a linear resolution. 
\end{proof}

\begin{Proposition}
Let $I=(L(u,v))$ be an arbitrary squarefree lexsegment ideal with $u\neq v$ generated in degree $q$. Then $\depth(S/I)\leq n-2$ and the equality holds if and only if $I$ is Cohen--Macaulay of dimension $n-2$.
\end{Proposition}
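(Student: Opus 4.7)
The plan splits into two parts: first I would establish the bound $\depth(S/I) \leq n-2$, then characterize when equality occurs.

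For the bound, I would observe that since $u \neq v$, the minimal monomial generating set $G(I) = L(u,v)$ has cardinality at least two. Picking any two distinct generators $m_1, m_2 \in G(I)$, the Taylor-type relation $(\lcm(m_1,m_2)/m_1)\,e_1 - (\lcm(m_1,m_2)/m_2)\,e_2$ is a nonzero first syzygy of $I$, so $\projdim_S(I) \geq 1$ and hence $\projdim_S(S/I) \geq 2$. Applying the graded Auslander--Buchsbaum formula to $S/I$ then gives
$$\depth(S/I) = n - \projdim_S(S/I) \leq n - 2.$$

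For the equivalence, the direction $(\Leftarrow)$ is immediate: $I$ being Cohen--Macaulay with $\dim(S/I) = n-2$ means $\depth(S/I) = \dim(S/I) = n-2$ by definition. For $(\Rightarrow)$, I would assume $\depth(S/I) = n-2$ and use the chain $\depth(S/I) \leq \dim(S/I) \leq n - \height(I)$ together with the bound $\height(I) \geq 2$ to force $\dim(S/I) = n - 2$; then $\depth(S/I) = \dim(S/I)$ yields the Cohen--Macaulay property. The height bound $\height(I) \geq 2$ would be read off from the minimal primary decompositions of Theorem~\ref{prim dec initial}, Theorem~\ref{prim dec final}, and Theorem~\ref{prim dec completely}: the associated simplicial complex $\Delta$ with $I_\Delta = I$ contains a facet $F$ of cardinality $q-1$, giving a minimal prime $P_{F^c}$ of $I$ of height $n-q+1 \geq 2$ whenever $q \leq n-1$.

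The main obstacle will be verifying $\height(I) \geq 2$ uniformly across all configurations, especially the degenerate case where $u$ and $v$ share a common initial variable $x_{j_1}$. In that case one writes $I = x_{j_1} J$ for a squarefree lex segment $J$ in $k[x_{j_1+1}, \ldots, x_n]$ with the same multiplicity of generators, and iterates this peeling until arriving at a lex segment $K$ whose outer generators begin with distinct variables. The minimal primes of $K$, extended to $S$, then appear among the minimal primes of $I$ (alongside the principal primes $(x_{j_1}), \ldots, (x_{j_a})$ coming from the peeled variables) and contribute the required codimension $\geq 2$, so the recursion terminates with the desired height bound.
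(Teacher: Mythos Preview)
Your argument for $\depth(S/I)\leq n-2$ is correct and more direct than the paper's. The paper instead assumes $\depth(S/I)=n-1$, deduces that $S/I$ is Cohen--Macaulay, invokes Eagon--Reiner to get a linear resolution for $I^\vee$, and then uses the resulting degree-$1$ generator of $I^\vee$ to force $I^\vee$ into degree $1$, hence $I$ principal. Your route via $\projdim_S(S/I)\geq 2$ and Auslander--Buchsbaum avoids duality entirely.

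Your treatment of the equivalence, however, has a genuine gap. The bound $\height(I)\geq 2$ requires that \emph{every} minimal prime have height at least $2$; pointing to a single facet of cardinality $q-1$ (a prime of height $n-q+1$) is the wrong end of the spectrum and says nothing about the minimum. More seriously, your peeling $I=x_{j_1}J$ displays $(x_{j_1})$ itself as a minimal prime of $I$, so in that degenerate case you obtain $\height(I)=1$, the opposite of what you want. Nor do Theorems~\ref{prim dec initial}, \ref{prim dec final}, \ref{prim dec completely} rescue the claim: they cover only initial, final, and completely lex ideals (not arbitrary ones), and even Theorem~\ref{prim dec completely} produces primes of height $n-q$, which equals $1$ when $q=n-1$. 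A concrete failure: for $n=3$, $u=x_1x_3$, $v=x_2x_3$ one has $I=(x_1x_3,x_2x_3)=(x_3)\cap(x_1,x_2)$ with $\height(I)=1$, $\dim(S/I)=2$, and $\depth(S/I)=1=n-2$, yet $S/I$ is not Cohen--Macaulay of dimension $n-2$. The paper's own argument passes through the (equally vulnerable) assertion that $\dim(S/I)=n-1$ forces all minimal primes to height $1$; both arguments implicitly require the extra hypothesis that no variable divides every element of $L(u,v)$, under which $\height(I)\geq 2$ is immediate and your chain of inequalities closes.
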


\begin{proof}
Firstly, assume by contradiction that $\depth(S/I)=n-1$. Hence $S/I$ is Cohen--Macaulay, or equivalently, $I^\vee$ has a linear resolution. Moreover, $I^\vee$ is generated in degree $1$, since there is a minimal prime ideal of $I$ of height $1$. Then any minimal prime ideal of $I$ has height $1$, thus $I$ is a principal ideal, contradiction.

Hence $\depth(S/I)\leq n-2$. 

Let $\depth(S/I)=n-2$. If $\dim(S/I)=n-1$, then all the minimal primes of $I$ are of height $1$, thus $I$ is a principal ideal, a contradiction. Therefore $\dim(S/I)=n-2$, hence $I$ is Cohen--Macaulay.
\end{proof}

The Cohen--Macaulay squarefree lexsegment ideals have been characterized in \cite{BST}.

	\[
\]

\end{document}